\newcommand{\f}[1]{\mathbf{#1}}
\newcommand{\tripleneg}{\negthickspace\negthickspace\negthickspace}
\newtheorem{thm}{Theorem}[section]
\newtheorem{lem}[thm]{Lemma}
\title{A Version of the Circle Method for the Representation of Integers by Quadratic Forms}
\author{Nic Niedermowwe}
\begin{document}

\maketitle

\section{Introduction}

It is a classical problem to determine the number of representations of a non-zero integer $N$
by a quadratic form $F(\f{x})$ with $\f{x}=(x_1,\ldots,x_n)$ contained in an expanding box $PB$,
where the real parameter $P$ tends to infinity.
The problem has its genesis in the degree two case of Waring's problem,
which Hardy and Littlewood \cite{HardyLittlewoodPNI} were able to treat with their newly invented circle
method in the case $n\ge 5$. A few years later Kloosterman \cite{Kloosterman} was able to improve on this by
giving a formula for the number of representations of an integer by definite diagonal quadratic
forms in at least four variables. Since then a variety of different proofs based on the circle
method has been given for the general problem. The present paper expounds yet another
variation of the argument.

In our approach we initially count the representations weighted by a Gaussian function.
This facilitates an essential application of Poisson summation, but causes additional
work if we are interested in the unweighted number of solutions.
In Theorems \ref{thmAsympFmla} and \ref{thmAsympFmlaChi} we obtain the asymptotic formula
\[ R_\omega(N)\sim I_\omega(N)\mathfrak{S}(N)\qquad\text{ as }\; P\longrightarrow\infty, \]
first for a Gaussian weight $\omega(\f{x})=w(\f{x})$ and then for the characteristic
function $w(\f{x})=\chi_{PB}(\f{x})$ on the region $PB$.
Here $I_\omega$ is the singular integral associated with $\omega$ and $\mathfrak{S}$ is the singular series.
It should be pointed out that Malyshev \cite{Malyshev} and Moroz \cite{Moroz} followed a similar strategy
to count integral points on quadrics. Also noteworthy is the work of Heath-Brown \cite{HB2} whose circle method
with compactly supported weights yields the precise order of magnitude for the number of representations.

Before getting started it will be necessary to introduce some notation and conventions.
Our results are valid whenever $n\ge 4$ and $F$ is non-singular.
We write $\mathcal{F}$ for the matrix of $F$ given by $F(\f{x})=\frac{1}{2}\f{x}^T\mathcal{F}\f{x}$,
and we let $\mathcal{M}$ be a real orthogonal matrix that diagonalizes $\mathcal{F}$.
Accordingly we choose an $n$-dimensional hyperrectangle $B$ such that
the edges of $\mathcal{M}^TB$ are parallel to the coordinate axes.
The letter $\varepsilon$ denotes an arbitrarily small positive number, not necessarily the same
from instance to instance. Implicit constants in big-$O$ and $\ll$ notation may depend
upon $F$, $B$ and $\varepsilon$.

\let\thefootnote\relax\footnotetext{2000 Mathematics Subject Classification: 11P55 (primary); 11D09, 11D45, 11D72, 11D85 (secondary).}

\vskip .2in
\emph{Acknowledgement.}
The author is studying for a D.Phil. at the University of Oxford. I would like to
thank Prof. Heath-Brown for the excellent supervision that has accompanied my work.
\vskip .2in

\section{Application of the circle method}\label{secCM}

In setting up the circle method we follow the paper of Heath-Brown \cite{HB1}. Thus we let
\begin{equation}\label{WeightDefn}
w(\f{x})=\pi^{-n/2}K^{An}\exp\left(-|\f{x}-P\f{x}_0|^2P^{-2}K^{2A}\right),
\end{equation}
where $K=\log P$, and $A$ is an arbitrarily large positive number that determines how good
the error term in the asymptotic formula for $R_{\chi_{PB}}(N)$ is going be.
The purpose of the parameter $\f{x}_0$ will become apparent in Section \ref{secTransition}.
For the time being all we need to know is that $\f{x}_0\ll 1$.
We set $\mathfrak{Q}(\f{x},N)=F(\f{x})-N$ and define
\begin{equation}\label{Salpha}
S(\alpha)=\sum_{\f{x}\in\mathbb{Z}^n}w(\f{x})e\big(\alpha\mathfrak{Q}(\f{x},N)\big).
\end{equation}
This sum is absolutely convergent, so that one may write
\begin{equation}\label{RwNDefn}
R_w(N)=\int_0^1S(\alpha)d\alpha=\mathop{\sum_{\f{x}\in\mathbb{Z}^n}}_{\mathfrak{Q}(\f{x},N)=0}w(\f{x}).
\end{equation}
We now follow the reasoning at the beginning of Section 3 of Heath-Brown's paper.
(There our $R_w(N)$ is called $I$.) By setting $Q=\lfloor P\rfloor$ instead of
$Q=\lfloor P^{3/2}\rfloor$ one obtains the equivalent of Heath-Brown's Lemma 7 for quadratic forms, namely

\begin{lem}\label{CFPlem7}
We have
\[ R_w(N)=\sum_{q\le Q}\int_{-1/qQ}^{1/qQ}S_0(q,z)dz\hskip 2.5in \]
\begin{equation}\label{lem7}
+O\Bigg(Q^{-2}\sum_{q\le Q}\sum_{|u|\le q/2}(1+|u|)^{-1}\max_{1/2\le qQ|z|\le 1}|S_u(q,z)|\Bigg),
\end{equation}
where
\[ S_u(q,z)=\mathop{\sum_{s=1}^q}_{(s,q)=1}e_q(us)S\left(\frac{\overline{s}}{q}+z\right). \]
\end{lem}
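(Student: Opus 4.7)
The plan is to adapt the Kloosterman-refined Farey dissection argument in the proof of Heath-Brown's Lemma~7 of \cite{HB1}, with the only change $Q=\lfloor P\rfloor$ in place of $Q=\lfloor P^{3/2}\rfloor$. Starting from the identity $R_w(N)=\int_0^1 S(\alpha)\,d\alpha$ in \eqref{RwNDefn}, I would partition $[0,1]$ into the Farey arcs $\mathcal{A}_{s/q}$ of order $Q$. Because consecutive Farey denominators $q,q^\pm$ of order $Q$ satisfy $q+q^\pm>Q$, each $\mathcal{A}_{s/q}$ sits inside the symmetric interval $\{s/q+z:|z|\le (qQ)^{-1}\}$, with the discrepancy confined to two end-pieces lying in the range $1/2\le qQ|z|\le 1$, which is exactly the range of the maximum in \eqref{lem7}.

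In each arc I would substitute $\alpha=s/q+z$ and then re-label the summation via the bijection $s\mapsto\overline{s}$ on units modulo $q$. The inverse arises naturally because the mediant endpoints of $\mathcal{A}_{s/q}$ are governed by the Farey neighbour relation $s^\pm q-sq^\pm=\pm 1$, which forces $q^\pm\equiv\mp\overline{s}\pmod{q}$. After the re-labelling, the endpoints $z^\pm(s)=\pm(q(q+q^\pm))^{-1}$ depend on $s$ only through the residue $\mp s\pmod{q}$.

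The heart of the proof is the Kloosterman completion step. Replacing each $\mathcal{A}_{s/q}$ by $[-1/(qQ),1/(qQ)]$ and summing over $s$ coprime to $q$ produces the main term $\sum_{q\le Q}\int_{-1/(qQ)}^{1/(qQ)}S_0(q,z)\,dz$, while the discrepancy is captured by the indicators $\mathbb{1}_{[z^+(s),1/(qQ)]}(z)$ and $\mathbb{1}_{[-1/(qQ),z^-(s)]}(z)$. Expanding each such indicator as a Fourier series in $s$ modulo $q$ collapses the remaining sum on $s$ into $S_u(q,z)$, and standard bounds on the Fourier coefficients of an incomplete-interval indicator then produce the error in \eqref{lem7} with its $Q^{-2}$ prefactor and $(1+|u|)^{-1}$ weights.

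The principal technical obstacle is carrying out this Fourier expansion explicitly: one has to identify the discrepancy endpoints with a subset of residues modulo $q$ via the correspondence $q^\pm\leftrightarrow \mp\overline{s}\pmod{q}$ and verify the size of the resulting coefficients. Once that is done, the rest of the argument is purely formal and depends only on $Q$ being a positive integer, not on the specific relation $Q=\lfloor P\rfloor$ nor on $\mathfrak{Q}(\f{x},N)$ being quadratic rather than cubic; hence \eqref{lem7} follows exactly as in \cite{HB1}.
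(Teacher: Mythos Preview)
Your proposal is correct and matches the paper's approach exactly: the paper does not give an independent proof but simply directs the reader to follow the argument at the start of Section~3 of \cite{HB1}, replacing $Q=\lfloor P^{3/2}\rfloor$ by $Q=\lfloor P\rfloor$, which is precisely the Kloosterman--Farey dissection you describe. Your write-up is in fact more detailed than the paper's one-line reference.
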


The next step is to separate the dependence of $S_u(q,z)$ on $u$ and $q$ from that on $z$.
We do so by using Poisson's summation formula, which states that a rapidly decreasing smooth
map $f$ is related to its Fourier transform by the identity
\[ \sum_{\f{x}\in\mathbb{Z}^n}f(\f{x})=\sum_{\f{x}\in\mathbb{Z}^n}\int_{\mathbb{R}^n}f(\f{y})e(\f{x}.\f{y})\,d\f{y}. \]
By the choice of our weight $w$ it is permissible to apply the above to the function
$f(\f{x})=w(\f{v}+q\f{x})e(\alpha\mathfrak{Q}(\f{v}+q\f{x},N))$. This yields
\[ S_u(q,z)=q^{-n}\sum_{\f{b}\in\mathbb{Z}^n}S_u(q,\f{b})I\left(z,\frac{\f{b}}{q}\right), \]
where
\begin{equation}\label{Suqb}
S_u(q,\f{b})=\mathop{\sum_{s=1}^q}_{(s,q)=1}\sum_{\f{v}\tripleneg\pmod{q}}
e_q\Big(\overline{s}\mathfrak{Q}(\f{v},N)+us-\f{b}.\f{v}\Big),
\end{equation}
\begin{equation}\label{Izbeta}
I(z,\boldsymbol{\beta})=\int_{\mathbb{R}^n}w(\f{x})e\Big(z\mathfrak{Q}(\f{x},N)
+\boldsymbol{\beta}\f{x}\Big)\,d\f{x}.
\end{equation}
An integration with respect to $d\f{x}$ is to be interpreted as an $n$-fold repeated integral $dx_1\ldots dx_n$.
The substitution $\f{x}\mapsto\mathcal{M}\f{x}$ transforms $F$ into a diagonal form $D$. Letting a superscript $^*$
indicate multiplication by $\mathcal{M}^T$ from the left, we have
\[ I(z,\boldsymbol{\beta})=e(-zN)\int_{\mathbb{R}^n}w(\mathcal{M}\f{x})
e\big(zD(\f{x})+\boldsymbol{\beta}^*\f{x}\big)\,d\f{x}. \]
Now $\mathcal{M}$ is orthogonal, so that it is possible to factorize the integral over $\mathbb{R}^n$ into $n$ terms,
each of the shape
\[ \pi^{-1/2}K^A\int_{\mathbb{R}}\exp\left(-(x-Px_0^*)^2P^{-2}K^{2A}+2\pi i(z\lambda x^2+\beta^*x)\right)dx, \]
where the $\lambda$ are the eigenvalues of $\frac{1}{2}\mathcal{F}$. The above expression introduces a convenient piece
of notation: vectors are represented by bold letters, and their components are denoted by the same letter in normal
print with an index. This index will often be omitted if it is irrelevant so long as it is kept fixed.
Returning to our proof, we write $-a$ for the coefficient of $x^2$, $b$ for the coefficient of $x$
and $c$ for the constant term in the argument of the exponential function.
Then the variable substitution $x\mapsto x+b/2a$ shows the above term to be equal to
\begin{equation}\label{Ifactor}
K^Aa^{-1/2}\exp\left(c+\frac{b^2}{4a}\right).
\end{equation}
We furthermore note that
\begin{equation}\label{Re}
\left|\exp\left(c+\frac{b^2}{4a}\right)\right|
=\exp\left(-\frac{\pi^2K^{2A}\left(2z\lambda x_0^*+\beta^*P^{-1}\right)^2}{P^{-4}K^{4A}+4\pi^2z^2\lambda^2}\right).
\end{equation}

We begin by proving two bounds on $I(z,\boldsymbol{\beta})$ that will be useful at various points
in the future.

\begin{lem}\label{lemIEst}
We have
\begin{equation}\label{IEst}
I(z,\boldsymbol{\beta})\ll K^{2An}\min\left(P^n,|z|^{-n/2}\right).
\end{equation}
\end{lem}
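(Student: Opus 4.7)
The plan is to exploit the factorization of $I(z,\boldsymbol{\beta})$ as a product of $n$ one-dimensional integrals of the form displayed just above \eqref{Ifactor}, and to bound each factor separately. Because the orthogonal change of variable $\f{x}\mapsto\mathcal{M}\f{x}$ preserves the absolute value of the integral and pulls out a unit factor $|e(-zN)|=1$, it suffices to control each one-dimensional piece given by \eqref{Ifactor}.

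First I would invoke \eqref{Re} to see that $|\exp(c+b^{2}/(4a))|\le 1$, since the argument displayed there is a non-positive real number. Combined with \eqref{Ifactor}, this shows that the absolute value of every one-dimensional factor is at most $K^{A}|a|^{-1/2}$.

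Next, completing the square in the integrand one finds $a=P^{-2}K^{2A}-2\pi iz\lambda$, where $\lambda$ is the eigenvalue of $\tfrac12\mathcal{F}$ associated to the factor in question, so that
\[ |a|^{2}=P^{-4}K^{4A}+4\pi^{2}z^{2}\lambda^{2}. \]
Discarding either summand on the right yields the two complementary estimates
\[ |a|^{-1/2}\le PK^{-A}\qquad\text{and}\qquad|a|^{-1/2}\ll|z|^{-1/2}, \]
the second with an implicit constant depending on $F$ through the non-zero eigenvalue $\lambda$. Hence each factor is $\ll\min\bigl(P,\,K^{A}|z|^{-1/2}\bigr)$.

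Finally, multiplying the $n$ factors together gives
\[ I(z,\boldsymbol{\beta})\ll\min\bigl(P,\,K^{A}|z|^{-1/2}\bigr)^{n}=\min\bigl(P^{n},\,K^{An}|z|^{-n/2}\bigr)\le K^{An}\min\bigl(P^{n},|z|^{-n/2}\bigr), \]
which is in fact a factor of $K^{An}$ sharper than \eqref{IEst}. There is no real obstacle here: the argument is essentially a careful reading of \eqref{Ifactor} and \eqref{Re}, and the non-singularity of $F$ is needed only to keep the eigenvalues $\lambda$ bounded away from zero, which is absorbed into the implicit constant.
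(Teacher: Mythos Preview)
Your proof is correct and follows essentially the same route as the paper: factorize $I(z,\boldsymbol{\beta})$ into the $n$ one-dimensional pieces \eqref{Ifactor}, use \eqref{Re} to see that $\exp(c+b^{2}/4a)\ll 1$, and bound $|a|^{-1/2}$ by dropping either the real or the imaginary part of $a$. Your observation that the argument actually delivers the sharper bound $\min(P^{n},K^{An}|z|^{-n/2})$ is also right; the paper simply records the slightly weaker but sufficient estimate \eqref{IEst}.
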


\begin{proof}
The lemma follows from the fact that $I(z,\boldsymbol{\beta})$ is the product of $n$ factors (\ref{Ifactor}),
each of which is $\ll K^{2A}\min(P,|z|^{-1/2})$. For it is easily seen from the definition of $a$ that
\[ a^{-1/2}\ll\min(P^{-2}K^{2A},|z|)^{-1/2} \]
and that $\exp(c+b^2/4a)\ll 1$ by (\ref{Re}).
\end{proof}

The next lemma equips us with a bound for $I(z,\boldsymbol{\beta})$ in terms of $\boldsymbol{\beta}$, which will 
subsequently be used to show that the sum $S_u(q,z)$ leaves an error of $O(1)$ when terminated at
\[ B_0=qK^s\left(P^{-1}+|z|P\right), \]
where $s=2+A$.

\begin{lem}
If $|\boldsymbol{\beta}|\ge B_0/q$ and $|z|\le 1$, then
\[ I(z,\boldsymbol{\beta})\ll\exp(-C\log^2PW), \]
where $W=2+|\boldsymbol{\beta}|$.
\end{lem}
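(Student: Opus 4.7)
The plan is to factor $I(z,\boldsymbol{\beta})$ into the $n$ one-dimensional integrals (\ref{Ifactor}) produced in the calculation preceding Lemma \ref{lemIEst}, concentrate the $\boldsymbol{\beta}$-decay on one carefully chosen factor, and bound the remaining $n-1$ factors trivially by Lemma \ref{lemIEst} (each at most $K^{2A}P$). Since $\exp(-cK^2)=P^{-cK}$ eventually dominates any fixed polynomial in $P$, the polynomial prefactor $(K^{2A}P)^{n-1}$ will be absorbed into the final exponential bound, and the whole problem reduces to producing an exponential decay of order $\exp(-CK^2W)$ from a single factor.

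I would then pick an index $j$ for which $|\beta_j^*|$ is maximal. By orthogonality of $\mathcal{M}$ we have $|\boldsymbol{\beta}^*|=|\boldsymbol{\beta}|$, so $|\beta_j^*|\ge|\boldsymbol{\beta}|/\sqrt n$, which combined with the hypothesis yields $|\beta_j^*|\ge K^s(P^{-1}+|z|P)/\sqrt n$. Since $\f{x}_0\ll 1$ while $K^s\to\infty$, the term $|\beta_j^*|P^{-1}$ majorizes $|2z\lambda_j x_{0,j}^*|$, so the squared numerator of (\ref{Re}) is at least $(\beta_j^*)^2P^{-2}/4$. I would then split into two cases according to which term in $P^{-4}K^{4A}+4\pi^2z^2\lambda_j^2$ dominates. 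In Case A ($|z|\ll P^{-2}K^{2A}$) the exponent satisfies $\mathrm{Re}\le -c(\beta_j^*)^2P^2/K^{2A}$; invoking the lower bound on $|\beta_j^*|$ as a square gives $\mathrm{Re}\le -cK^4$, while writing $(\beta_j^*)^2=|\beta_j^*|\cdot|\beta_j^*|$ and using the lower bound once gives $\mathrm{Re}\le -cK^{2-A}|\beta_j^*|P\le -cK^2|\beta_j^*|$ (valid once $P\ge K^A$, which is automatic for $P$ large as $A$ is fixed). Together these yield $\mathrm{Re}\le -c(K^4+K^2|\beta_j^*|)\le -CK^2W$. In Case B ($|z|\gg P^{-2}K^{2A}$) the analogous estimates, using $|\beta_j^*|\ge K^s|z|P/\sqrt n$, produce the bounds $\mathrm{Re}\le -cK^{4+4A}$ and $\mathrm{Re}\le -cK^{2A+s}|\beta_j^*|/(|z|P)$, and a sub-case analysis on whether $|z|P\ll K^{3A}$ or not combines them to give $\mathrm{Re}\le -CK^2W$.

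The main obstacle is the bookkeeping in Case B, particularly for the sub-regime $|z|P\gtrsim K^{3A}$, where one must exploit simultaneously the constraints $|\beta_j^*|\ge|\boldsymbol{\beta}|/\sqrt n$ and $|\beta_j^*|\ge K^s|z|P/\sqrt n$ to ensure that the constant-type bound $K^{4+4A}$ and the linear-in-$|\beta_j^*|$ bound mesh together cleanly and dominate $K^2W$ uniformly across the admissible range of $(|z|,|\boldsymbol{\beta}|)$.
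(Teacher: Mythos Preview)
Your setup is exactly the paper's: factor $I(z,\boldsymbol{\beta})$ into the $n$ one-dimensional pieces (\ref{Ifactor}), pick a coordinate with $|\beta^*|\gg B_0/q$, observe that $|2z\lambda x_0^*|\le\tfrac12|\beta^*|P^{-1}$ so the numerator in (\ref{Re}) is $\gg(\beta^*P^{-1})^2$, and absorb the remaining $n-1$ factors into the exponential. Up to that point you and the paper agree.

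The gap is that you have misread the target. In this paper $\log^2 PW$ denotes $(\log(PW))^2$, not $(\log P)^2\cdot W=K^2W$. The bound you set out to prove, $I(z,\boldsymbol{\beta})\ll\exp(-cK^2W)$, is in fact \emph{false}. Take $z=1$ and $|\boldsymbol{\beta}|\asymp K^sP$ (which satisfies the hypothesis $|\boldsymbol{\beta}|\ge K^s(P^{-1}+|z|P)$). Then the denominator in (\ref{Re}) is $\asymp 1$ and the chosen factor contributes $\exp(-cK^{2A}\cdot K^{2s})=\exp(-cK^{4+4A})$, while the other factors are $O(1)$; hence $|I|\asymp K^{nA}\exp(-cK^{4+4A})$. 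On the other hand $K^2W\asymp K^{4+A}P$, and since $P=e^K$ grows far faster than any power of $K$, one has $K^{4+4A}=o(K^{4+A}P)$. So $I$ is \emph{not} $\ll\exp(-cK^2W)$. This is precisely the sub-regime $|z|P\gg K^{3A}$ that you flagged as ``the main obstacle''; the obstacle is genuine because the claim is too strong there.

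With the correct reading the argument is much shorter. One only needs the exponent in (\ref{Re}) to majorise $(\log(PW))^2$, and the paper achieves this with a single case split on the size of $|\beta^*|$: if $|\beta^*|\le P^2$ then $(\log(PW))^2\ll K^2$ and the exponent is shown to be $\gg K^4$; if $|\beta^*|>P^2$ then, using only $|z|\le 1$, the exponent is $\gg(\beta^*)^2P^{-2}\gg(\log PW)^2$. No splitting on $|z|$ is needed. Your Case~A analysis is correct but overkill, and your Case~B would succeed immediately once aimed at the right quantity.
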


\begin{proof}
We note that $|\boldsymbol{\beta}|\ge B_0/q$ implies $|\beta^*|\gg B_0/q$ for at least one $\beta^*$.
Fix such a $\beta^*$. Then
\[ \left|\frac{2z\lambda x_0^*}{P^{-1}\beta^*}\right|\le\frac{2|z\lambda x_0^*|}{CK^s\left(P^{-2}+|z|\right)}
\le\frac{2|\lambda x_0^*|}{CK^s}\le\frac{1}{2} \]
for all sufficiently large $P$, whence
\[ \left(2z\lambda x_0^*+P^{-1}\beta^*\right)^2\gg \left(P^{-1}\beta^*\right)^2. \]
Therefore $|c+b^2/4a|$ is
\begin{equation}\label{cba}
\gg\frac{K^{2A}P^{-2}\mathop{\beta^*}^2}{P^{-4}K^{4A}+z^2}.
\end{equation}
We now consider two cases. If $|\beta^*|\le P^2$, then $\log^2{PW}\ll K^4$, and the lemma follows
by noting that (\ref{cba}) is
\[ \gg\frac{K^{4(1+A)}(P^{-4}+z^2)}{P^{-4}K^{4A}+z^2}=\frac{P^{-4}+z^2}{P^{-4}+z^2K^{-4A}}K^4\ge K^4. \]
If on the other hand $|\beta^*|>P^2$, then it is sufficient to observe that (\ref{cba}) is
\[ \gg |\beta^*|^2P^{-2}\gg\log^2{|\beta^*|/P}\gg\log^2{\sqrt{|\beta^*|}}\gg\log^2{PW}. \]
\end{proof}

\begin{lem}\label{lemFinitebRange}
If $|z|\le 1$, then
\[ S_u(q,z)=q^{-n}\mathop{\sum_{\f{b}\in\mathbb{Z}^n}}_{|\f{b}|\le B_0}S_u(q,\f{b})I\left(z,\frac{\f{b}}{q}\right)+O(1). \]
\end{lem}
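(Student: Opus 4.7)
The plan is to start from the Poisson‐summation identity
\[ S_u(q,z)=q^{-n}\sum_{\f{b}\in\mathbb{Z}^n}S_u(q,\f{b})I\!\left(z,\frac{\f{b}}{q}\right) \]
that was derived just before Lemma \ref{lemIEst}, split the sum at $|\f{b}|=B_0$, and show that the tail $|\f{b}|>B_0$ contributes only $O(1)$. Everything on the head $|\f{b}|\le B_0$ is then exactly the claimed main term.

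For the trivial bound on $S_u(q,\f{b})$ I would return to the definition (\ref{Suqb}): the outer sum has at most $q$ terms and the inner sum has $q^n$ terms, each summand a root of unity, so $|S_u(q,\f{b})|\le q^{n+1}$ and hence $q^{-n}|S_u(q,\f{b})|\ll q\ll P$. For $I(z,\f{b}/q)$ with $|\f{b}|>B_0$ I would invoke the previous lemma (whose hypothesis $|\boldsymbol{\beta}|\ge B_0/q$ is met by $\boldsymbol{\beta}=\f{b}/q$), obtaining
\[ I\!\left(z,\frac{\f{b}}{q}\right)\ll\exp\!\bigl(-C\log^2 PW\bigr),\qquad W=2+|\f{b}|/q. \]
Putting these together it remains to prove
\[ P\sum_{|\f{b}|>B_0}\exp\!\bigl(-C\log^2 PW\bigr)=O(1). \]

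To handle this tail I would split at $|\f{b}|=P$. For $B_0<|\f{b}|\le P$ one has $PW\ge P$, so each summand is $\ll\exp(-C\log^2P)$, which beats any fixed negative power of $P$; summing over the $\ll P^n$ lattice points in this range and multiplying by the factor $P$ still gives something $\ll P^{-1}$. For $|\f{b}|>P$ one has $PW\gg|\f{b}|$, so $\exp(-C\log^2PW)\ll|\f{b}|^{-C\log|\f{b}|}$; dyadically decomposing and comparing with an integral shows that $\sum_{|\f{b}|>P}|\f{b}|^{-C\log|\f{b}|}$ is rapidly decreasing in $P$, and again the extra factor of $P$ is harmless. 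Combining the two ranges yields $O(1)$, which establishes the lemma.

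The only mildly delicate point is to check that the exponential decay coming from the previous lemma really does swamp both the $q^{n+1}$ crude bound on $S_u(q,\f{b})$ and the polynomial growth in the number of $\f{b}$'s, but since $\log^2PW$ grows faster than any multiple of $\log P$ or $\log|\f{b}|$ this is essentially automatic, so no serious obstacle is expected.
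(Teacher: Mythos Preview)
Your proposal is correct and follows essentially the same route as the paper's own proof: trivially bound $S_u(q,\f{b})$ by $q^{n+1}$, apply the preceding lemma to $I(z,\f{b}/q)$ for $|\f{b}|>B_0$, and then split the resulting tail sum at $|\f{b}|=P$, handling the inner range by the crude count $\ll P^n$ and the outer range by a dyadic decomposition. The paper keeps the factor $q$ rather than replacing it immediately by $P$, but this is cosmetic; both arguments rely on $q\le Q\le P$ at the same point.
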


\begin{proof}
The trivial estimate $|S_u(q,z)|\le q^{n+1}$ together with the estimate of the previous lemma show that
\[ q^{-n}\mathop{\sum_{\f{b}\in\mathbb{Z}^n}}_{|\f{b}|\ge B_0}S_u(q,\f{b})I\left(z,\frac{\f{b}}{q}\right)
\ll q\sum_{\f{b}\in\mathbb{Z}^n}\exp\left(-C\log^2PW\right), \]
where now $W=2+|\f{b}|/q$, since the exponential function takes positive values only.
We split the summation into two. Using that the volume of the $n$-dimensional ball of radius $R$ is $\ll R^n$,
we get
\[ q\mathop{\sum_{\f{b}\in\mathbb{Z}^n}}_{|\f{b}|\le P}\exp\left(-C\log^2PW\right)\ll P^{n+1}P^{-C\log P} \ll 1, \]
since $q\le P$. The second part of the summation is
\[ q\mathop{\sum_{\f{b}\in\mathbb{Z}^n}}_{|\f{b}|\ge P}\exp\left(-C\log^2PW\right)
\ll q\sum_{k=0}^\infty\mathop{\sum_{\f{b}\in\mathbb{Z}^n}}_{2^kP\le |\f{b}|\le 2^{k+1}P}\exp\left(-C\log^2PW\right). \]
We observe that $\log PW\ge P(2+2^kP/q)\ge 2^kP$. Also, the number of integer points
$\f{b}$ with $2^kP\le |\f{b}|\le 2^{k+1}P$ is less than $2^{(k+2)n}P^n$, whence we can conclude our estimation with
\[ \ll \sum_{k=0}^\infty\left(2^kP\right)^{n-C\log 2^kP}\ll 1, \]
as claimed.
\end{proof}

Next we give estimates on the exponential sum $S_u(q,\f{b})$, which we shall denote by
$S_u(q,\f{b},N)$ if it is necessary to highlight its dependence on $N$.
First, one obtains the uniform bound
\begin{equation}\label{SimpleEst}
S_u(q,\f{b},N)\ll q^{n/2+1}
\end{equation}
by a simple adaptation of the proof of Lemma 25 of \cite{HB2}. Although this greatly improves on the
trivial estimate, in the case $n=4$ the bound is not strong enough for our purposes. However, we will
obtain a better bound if $q$ is square-free, which ultimately is sufficient as one can exploit
the fact that square-full numbers are relatively rare. More precisely, it is not difficult to see that
the number $\nu(X)$ of square-full numbers $x\le X$ satisfies
\begin{equation}\label{NoSqFree}
\nu(X)\ll X^{1/2}.
\end{equation}
Working towards a better estimate
we observe the multiplicative property 
\[ S_u(rs,\f{b},N)=S_u(r,\f{b},\overline{s}N)S_u(s,\f{b},\overline{r}N) \]
for coprime integers $r$ and $s$. From now on we shall write $q=q_1q_2$ where $q_1$ is the square-free
part of $q$ and $q_2$ is square-full.

\begin{lem}\label{lemAdvanEst}
We have
\begin{equation}\label{SuqbEst}
S_u(q,\f{b},N)\ll q_1^{(n+1)/2+\varepsilon}q_2^{n/2+1}(q_1,N)^{1/2}.
\end{equation}
\end{lem}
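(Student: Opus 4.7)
The plan is to exploit the multiplicativity of $S_u$ in $q$ and reduce to the case of prime modulus, where one can evaluate the exponential sum directly via Gauss and Kloosterman sums. Applying the multiplicative property stated just above the lemma to the decomposition $q=q_1q_2$ gives
\[ S_u(q,\f{b},N)=S_u(q_1,\f{b},\overline{q_2}N)\,S_u(q_2,\f{b},\overline{q_1}N). \]
The square-full factor is already controlled by (\ref{SimpleEst}) with the bound $q_2^{n/2+1}$, and $(q_1,\overline{q_2}N)=(q_1,N)$ since $(q_1,q_2)=1$. It therefore suffices to prove, for $q$ square-free,
\[ S_u(q,\f{b},M)\ll q^{(n+1)/2+\varepsilon}(q,M)^{1/2}, \]
and by a further application of multiplicativity only the prime case $q=p$ has to be handled.

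For the finitely many primes $p\mid 2\det\mathcal{F}$ (including $p=2$) the trivial bound $|S_u(p,\f{b},M)|\le p^{n+1}$ is absorbed into the $F$-dependent implicit constant, so henceforth I assume $p\nmid 2\det\mathcal{F}$. Swapping the orders of summation in (\ref{Suqb}) and completing the square in $\f{v}$ via $\f{v}\mapsto\f{v}+s\mathcal{F}^{-1}\f{b}$ modulo $p$---legitimate because $\mathcal{F}$ is invertible mod $p$---yields
\[ S_u(p,\f{b},M)=\sum_{s=1}^{p-1}e_p\!\bigl((u-\tfrac12\f{b}^T\mathcal{F}^{-1}\f{b})s-\overline{s}M\bigr)\,T(\overline{s}), \]
where $T(\overline{s})=\sum_{\f{v}\bmod p}e_p(\overline{s}F(\f{v}))$. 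Diagonalizing $F$ modulo $p$ factorizes $T(\overline{s})$ into $n$ classical quadratic Gauss sums; using $g_p(a)=\bigl(\tfrac{a}{p}\bigr)g_p$ for $a\not\equiv 0\pmod{p}$ together with $|g_p|=p^{1/2}$ gives $T(\overline{s})=\bigl(\tfrac{s^n\Delta}{p}\bigr)g_p^{\,n}$ for some $\Delta=\Delta(F)\in\mathbb{F}_p^{\times}$, and in particular $|T(\overline{s})|=p^{n/2}$.

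The residual sum over $s$ is then a classical Kloosterman sum when $n$ is even and a Salié-type sum when $n$ is odd, in the two parameters $a=u-\tfrac12\f{b}^T\mathcal{F}^{-1}\f{b}$ and $-M$. If $p\nmid M$, Weil's bound (or the explicit Salié evaluation) delivers $O(p^{1/2})$ uniformly in $a$; if $p\mid M$ the sum degenerates and even the trivial estimate yields at worst $O(p)=O(p^{1/2}(p,M)^{1/2})$. Combined with $|T(\overline{s})|=p^{n/2}$, this produces $|S_u(p,\f{b},M)|\ll p^{(n+1)/2}(p,M)^{1/2}$ for every good prime. Multiplying over the prime divisors of $q_1$ and absorbing the bad-prime contributions as well as any divisor-type loss into $q_1^{\varepsilon}$ completes the argument.

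I expect the main obstacle to lie in the careful bookkeeping of the Gauss-sum evaluation, and in particular in verifying that the $s$-dependence carried by the Legendre symbols combines correctly with $e_p(as-\overline{s}M)$ to give a genuine Kloosterman or Salié sum---it is only then that Weil's estimate yields the square-root saving in $p$ that is responsible for the exponent $(n+1)/2$. The remaining work amounts to standard multiplicative bookkeeping and the finite-prime trivial bounds.
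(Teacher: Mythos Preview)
Your argument is correct and follows exactly the same route as the paper: split $q=q_1q_2$ by multiplicativity, bound the square-full part by (\ref{SimpleEst}), absorb the bad primes $p\mid 2\det\mathcal{F}$ trivially, and at each good prime complete the square to reduce to a Gauss sum times a Kloosterman/Sali\'e sum. The only difference is cosmetic: where you carry out the Gauss--Kloosterman evaluation explicitly, the paper simply cites an adaptation of Lemma~26 of \cite{HB2}, obtaining the slightly sharper prime bound $|S_u(p,\f{b},M)|\le 2p^{(n+1)/2}\bigl(p,\,u-\overline{4}\f{b}^T\mathcal{F}^{-1}\f{b},\,M\bigr)^{1/2}$ before discarding the extra gcd entry; and the paper makes the $q_1^\varepsilon$ loss explicit via $2^{\omega(q_1)}\ll q_1^\varepsilon$.
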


\begin{proof}
By multiplicativity we can factorize
\begin{eqnarray*}
S_u(q,\f{b},N)&=&S_u(q_1,\f{b},\overline{q_2}N)S_u(q_2,\f{b},\overline{q_1}N) \\
&=&S_u(q_2,\f{b},\overline{q_1}N)\prod_{i=1}^{\omega(q_1)}S_u(p_i,\f{b},\overline{q/p_i}N)
\end{eqnarray*}
If $p_i|2\det{\mathcal{F}}$, then $p_i\ll 1$ and
$S_u(p_i,\f{b},N)\ll 1$ trivially. Otherwise a straightforward adaptation of Lemma 26 of \cite{HB2}
shows that
\[ |S_u(p_i,\f{b},\overline{q/p_i}N)|
\le 2p_i^{(n+1)/2}\left(p_i\,,\,u-\overline{4}\f{b}^T\mathcal{F}^{-1}\f{b}\,,\,
\overline{q/p_i}N\right)^{1/2}. \]
On using that $2^{\omega(q_1)}=d(q_1)\ll {q_1}^\varepsilon$ we obtain
\[ S_u(q_1,\f{b},\overline{q_2}N)\ll q_1^{(n+1)/2+\varepsilon}(q_1,N)^{1/2}. \]
Combining this bound with the estimate
\[ S_u(q_2,\f{b},\overline{q_1}N)\ll q_2^{n/2+1} \]
from above proves the result.
\end{proof}

We have now gathered all the prerequisites needed to deduce (\ref{RwN}) from (\ref{lem7}).
First we deal with the error term.

\begin{lem}\label{lemET}
The big-$O$ term that occurs in (\ref{lem7}) is of order $O(P^{(n-1)/2+\varepsilon})$.
\end{lem}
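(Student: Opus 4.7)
Plan: The strategy is to insert the truncation from Lemma \ref{lemFinitebRange} into the error term of (\ref{lem7}), estimate $|S_u(q,z)|$ pointwise via Lemmas \ref{lemIEst} and \ref{lemAdvanEst}, and then perform the $q$-summation by splitting into square-free and square-full parts. First I would note that in the relevant range $1/2\le qQ|z|\le 1$ with $Q\asymp P$, one has $|z|\asymp (qP)^{-1}\le 1$, so Lemma \ref{lemFinitebRange} applies and $B_0=qK^s(P^{-1}+|z|P)\ll K^s$ is polylogarithmic; in particular the number of $\f{b}\in\mathbb{Z}^n$ with $|\f{b}|\le B_0$ is $\ll K^{sn}$.

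Applying Lemma \ref{lemIEst} with $|z|^{-n/2}\asymp(qP)^{n/2}\le P^n$ gives $I(z,\f{b}/q)\ll K^{2An}(qP)^{n/2}$. Combined with Lemma \ref{lemAdvanEst} and the algebraic identity
\[ q^{-n}\cdot(qP)^{n/2}\cdot q_1^{(n+1)/2+\varepsilon}q_2^{n/2+1}=P^{n/2}q_1^{1/2+\varepsilon}q_2 \]
(where $q=q_1q_2$), this yields
\[ |S_u(q,z)|\ll P^{n/2+\varepsilon}q_1^{1/2+\varepsilon}q_2(q_1,N)^{1/2}, \]
after absorbing all factors of $K^{O(1)}$ into $P^\varepsilon$. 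The $O(1)$ remainder from Lemma \ref{lemFinitebRange} contributes $\ll P^{-1+\varepsilon}$ to (\ref{lem7}) and is negligible, while the $u$-sum yields a further harmless factor $\sum_{|u|\le q/2}(1+|u|)^{-1}\ll\log q\ll P^\varepsilon$.

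It thus remains to bound $\sum_{q\le Q}q_1^{1/2+\varepsilon}q_2(q_1,N)^{1/2}$, since this combined with the prefactor $Q^{-2}\cdot P^{n/2+\varepsilon}$ should give the desired $P^{(n-1)/2+\varepsilon}$. Fixing the square-full part $q_2$, I would handle the inner square-free sum by grouping by $d=(q_1,N)$: setting $q_1=dm$ yields $\sum_{q_1\le X,\text{ sq-free}}q_1^{1/2+\varepsilon}(q_1,N)^{1/2}\ll X^{3/2+\varepsilon}\sum_{d|N}d^{-1/2+\varepsilon}\ll X^{3/2+\varepsilon}N^\varepsilon$, which is admissible since $N\ll P^2$. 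Taking $X=Q/q_2$ and summing the resulting outer series over square-full $q_2\le Q$ via (\ref{NoSqFree}) and partial summation shows $\sum q_2^{-1/2-\varepsilon}\ll 1$, so the full $q$-sum is indeed $\ll P^{3/2+\varepsilon}$. The main obstacle worth flagging is that the square-free refinement of Lemma \ref{lemAdvanEst} is essential here: using the uniform bound (\ref{SimpleEst}) alone would cost a further factor $q^{1/2}$ on summation and fall short of the target by $P^{1/2}$, which is precisely the borderline case $n=4$.
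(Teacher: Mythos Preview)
Your proposal is correct and follows essentially the same route as the paper: truncate the $\f{b}$-sum via Lemma~\ref{lemFinitebRange}, bound $I(z,\f{b}/q)\ll K^{2An}(qQ)^{n/2}$ from Lemma~\ref{lemIEst}, insert Lemma~\ref{lemAdvanEst}, and then sum $\sum_{q_2}q_2\sum_{q_1\le Q/q_2}q_1^{1/2}(q_1,N)^{1/2}$ using (\ref{NoSqFree}) on the square-full part. The only cosmetic difference is that you bound the inner $q_1$-sum by grouping according to $d=(q_1,N)$, whereas the paper derives the same $X^{3/2+\varepsilon}$ bound from (\ref{DivisorEstimate}) and partial summation; the two arguments are equivalent.
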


\begin{proof}
By Lemma \ref{lemIEst} 
\[ I\left(z,\frac{\f{b}}{q}\right)\ll|z|^{-n/2}\ll(qQ)^{n/2}. \]
if $|z|\gg (qQ)^{-1}$. Consequently Lemma \ref{lemFinitebRange} shows that
\[ \max_{1/2\le qQ|z|\le 1}\left|S_u(q,z)\right|
\ll 1+q^{-n/2}Q^{n/2}\mathop{\sum_{\f{b}\in\mathbb{Z}^n}}_{|\f{b}|\ll K^s}\left|S_u(q,\f{b})\right|. \]
The first summand, $1$, produces a contribution of
\[ Q^{-2}\sum_{q\le Q}\sum_{|u|\le q/2}(1+|u|)^{-1}\ll 1 \]
to the error term of (\ref{lem7}).
The second summand's contribution to that error term is
\begin{equation}\label{SecSummand}
\mathop{\sum_{\f{b}\in\mathbb{Z}^n}}_{|\f{b}|\ll K^s}Q^{n/2-2}\sum_{q\le Q}q^{-n/2}\sum_{|u|\le q/2}(1+|u|)^{-1}|S_u(q,\f{b})|.
\end{equation}
We replace the summation over the $\f{b}$ by a factor $O(P^\varepsilon)$.
This is justified because any estimates on $S_u(q,\f{b})$ we shall use will be independent of $\f{b}$.
Also applying Lemma \ref{lemAdvanEst} then yields
\[ \ll P^\varepsilon Q^{n/2-2}\sum_{q_2\le Q}q_2\sum_{q_1\le Q/q_2}{q_1}^{1/2}(q_1,N)^{1/2}. \]
At this stage we note that
\begin{equation}\label{DivisorEstimate}
\sum_{r\le X}(r,N)=\sum_{d|N}\mathop{\sum_{r\le X}}_{(r,N)=d}d
\le d(N)X\ll P^\varepsilon X,
\end{equation}
whereupon partial summation yields
\[ \sum_{r\le X}r^{1/2}(r,N)^{1/2}\ll P^\varepsilon X^{3/2}. \]
Therefore we can continue our estimation as
\[ \ll P^\varepsilon Q^{n/2-2}\sum_{q_2\le Q}q_2\left(\frac{Q}{q_2}\right)^{3/2}
=P^\varepsilon Q^{(n-1)/2}\sum_{q_2\le Q}q_2^{-1/2}\ll P^{(n-1)/2+\varepsilon}, \]
where the last step follows from partial summation in connection with (\ref{NoSqFree}).
\end{proof}

We now turn our attention to the main term
of formula (\ref{lem7}), which by Lemma \ref{lemFinitebRange} equals
\begin{equation}\label{MainTerm}
\sum_{q\le Q}\int_{-1/qQ}^{1/qQ}q^{-n}\mathop{\sum_{\f{b}\in\mathbb{Z}^n}}_{|\f{b}|\le B_0}
S_0(q,\f{b})I\left(z,\frac{\f{b}}{q}\right)\,dz+O(1)
\end{equation}
All terms corresponding to non-zero $\f{b}$ can be absorbed into the error term.

\begin{lem}\label{lemMTC}
The main term (\ref{MainTerm}) of formula (\ref{lem7}) is equal to
\begin{equation}\label{MTC}
\sum_{q\le Q}q^{-n}S_0(q,\f{0})\int_{-1/qQ}^{1/qQ}I\left(z,\f{0}\right)dz
+O\left(P^{(n-1)/2+\varepsilon}\right).
\end{equation}
\end{lem}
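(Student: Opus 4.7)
The $\f{b}=\f{0}$ summand of (\ref{MainTerm}) is precisely the first term of (\ref{MTC}), so the task is to show that the combined contribution from nonzero $\f{b}$ is $O(P^{(n-1)/2+\varepsilon})$, which may then be absorbed into the error term. Since $q\le Q\le P$ and $|z|\le 1/(qQ)\le 1/q$, we have $B_0\le qK^s(P^{-1}+1/q)\ll K^s$, so the summation is automatically restricted to $1\le|\f{b}|\ll K^s$.

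The crucial step is to shrink this range further by exploiting the exponential decay in (\ref{Re}). Pick the coordinate $*$ with $|\beta^*|\ge|\f{b}|/(q\sqrt{n})$; once $|\f{b}|\ge qK^{A+1}\sqrt{n}/P$, the inequality $|2z\lambda x_0^*|\ll K^{-A-1}|\beta^*P^{-1}|$ holds uniformly for $|z|\le 1/(qQ)$, because $\f{x}_0\ll 1$ while the second term is much larger. Hence $|2z\lambda x_0^*+\beta^*P^{-1}|\gg|\beta^*|/P$, and (\ref{Re}) supplies
\[
|I(z,\f{b}/q)|\ll P^n\exp\left(-\frac{c\,P^2|\beta^*|^2}{K^{2A}}\right)\ll P^n e^{-cK^2}
\]
for every $z$ in the range, so the contribution of such $\f{b}$ is negligible. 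The effective range is therefore $1\le|\f{b}|\ll qK^{A+1}/P$, which in particular is empty unless $q\ge P/K^{A+1}$.

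For $\f{b}$ in this residual range I would apply $\int_{-1/qQ}^{1/qQ}|I(z,\f{b}/q)|\,dz\ll K^{2An}P^{n-2}$, a direct consequence of Lemma \ref{lemIEst}, together with the bound $|S_0(q,\f{b})|\ll q_1^{(n+1)/2+\varepsilon}q_2^{n/2+1}(q_1,N)^{1/2}$ from Lemma \ref{lemAdvanEst} and the cardinality estimate $\ll(qK^{A+1}/P)^n$ for admissible $\f{b}$. The $q^n$ factors cancel and the nonzero-$\f{b}$ contribution becomes
\[
\ll P^\varepsilon\cdot P^{-2}\sum_{q=q_1q_2\le Q}q_1^{(n+1)/2+\varepsilon}q_2^{n/2+1}(q_1,N)^{1/2}.
\]
By Cauchy--Schwarz against (\ref{DivisorEstimate}) one obtains $\sum_{q_1\le X}q_1^{(n+1)/2+\varepsilon}(q_1,N)^{1/2}\ll X^{(n+3)/2+\varepsilon}$, and partial summation over square-full $q_2$ via (\ref{NoSqFree}) gives $\sum_{q_2\le Q}q_2^{-1/2}\ll P^\varepsilon$, so the double sum is $\ll P^{(n+3)/2+\varepsilon}$. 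Multiplication by the prefactor $P^{-2}$ delivers the required $O(P^{(n-1)/2+\varepsilon})$.

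The main obstacle is establishing the Gaussian decay of $I(z,\f{b}/q)$ uniformly in $z$ on the excluded range: because (\ref{Re}) is a per-coordinate bound involving a sign competition in $2z\lambda x_0^*+\beta^*P^{-1}$, one must pick the threshold $qK^{A+1}/P$ carefully so that the decay survives across the entire $z$-interval, rather than being destroyed by an unlucky cancellation.
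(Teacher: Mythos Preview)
There is a genuine gap in your step~2. The inequality $|2z\lambda x_0^*|\ll K^{-A-1}|\beta^* P^{-1}|$ does \emph{not} hold uniformly for $|z|\le 1/(qQ)$ under the threshold $|\f{b}|\ge qK^{A+1}\sqrt{n}/P$. Take $q$ small (say $q=1$), so that your threshold is below $1$ and every nonzero $\f{b}$ is supposed to trigger decay. With $|\f{b}|=1$ one has $|\beta^*/P|\asymp 1/P$, while near the endpoint $|z|\sim 1/Q\sim 1/P$ one also has $|2z\lambda x_0^*|\asymp 1/P$; the two terms are of the same order and can cancel. Concretely, for diagonal $F$ with $\lambda_1=\lambda$, $\f{x}_0=(1,0,\dots,0)$, $\f{b}=(1,0,\dots,0)$, the choice $z=-1/(2\lambda P)$ (which lies in $[-1/Q,1/Q]$) makes the numerator in (\ref{Re}) vanish for coordinate $1$, and the remaining coordinates have $\beta^*=x_0^*=0$ so give no decay either. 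Thus $I(z,\f{b}/q)$ is not small on the whole $z$--interval. The point is that the genuine cutoff $B_0=qK^s(P^{-1}+|z|P)$ has \emph{two} pieces; for small $q$ and $|z|$ near $1/(qQ)$ the term $|z|P$ dominates and forces $|\f{b}|\gg K^s$, not $|\f{b}|\gg qK^s/P$.

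Repairing the threshold to $|\f{b}|\ll K^s$ (now nonempty for all $q$) destroys the $q^n$ cancellation that your step~3 relies on: the bound $\int|I|\,dz\ll P^{n-2+\varepsilon}$ together with Lemma~\ref{lemAdvanEst} then leaves $P^{n-2+\varepsilon}$, which for $n\ge 4$ is strictly worse than $P^{(n-1)/2+\varepsilon}$. The paper's proof avoids this by splitting the $(z,q)$--region rather than the $\f{b}$--range: when both $|z|<K^{-s}/(2q(Q+1))$ and $q<Q/(2K^s)$ one has $B_0<1$, so $\f{b}=\f{0}$ automatically; the shell $|z|\gg K^{-s}/(qQ)$ is handled via the $|z|^{-n/2}$ bound of Lemma~\ref{lemIEst}, and the tail $q\gg Q/K^s$ via the $P^n$ bound combined with the shortness of the remaining $q$--sum. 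Each piece then yields the required $O(P^{(n-1)/2+\varepsilon})$.
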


\begin{proof}
If $z$ and $q$ are simultaneously confined to the ranges
\begin{equation}\label{Ranges}
|z|<(2q(Q+1)K^s)^{-1}\quad\text{and}\quad q<Q/2K^s,
\end{equation}
then $B_0<1$, forcing $\f{b}=\f{0}$. It remains to show that the contribution from the ranges where (\ref{Ranges})
does not hold is of order $O(P^{(n-1)/2+\varepsilon})$.

In the case where
\[ z\in I=\left[\frac{-1}{qQ},\frac{1}{qQ}\right]\setminus\left(\frac{-K^{-s}}{2q(Q+1)},\frac{K^{-s}}{2q(Q+1)}\right), \]
the relevant part of the main term can be estimated by Lemma \ref{lemIEst} as
\[ \ll\sum_{q\le Q}\mathop{\sum_{\f{b}\in\mathbb{Z}^n}}_{|\f{b}|\ll K^s}
q^{-n}\left|S_0(q,\f{b})\right|\int_IK^{2An}|z|^{-n/2}\,dz \]
Yet again making sure that only estimates independent of $\f{b}$ will be used for $S_0(q,\f{b})$,
the summation over the $\f{b}$ simply contributes a factor $O(P^\varepsilon)$, i.e.
\[ \ll P^\varepsilon\sum_{q\le Q}q^{-n}\left|S_0(q,\f{b})\right|(qQK^s)^{n/2-1}
\ll P^{n/2-1+\varepsilon}\sum_{q\le Q}q^{-n/2-1}\left|S_0(q,\f{b})\right|. \]
We must show that
\begin{equation}\label{qSumEst}
\sum_{q\le Q}q^{-n/2-1}\left|S_0(q,\f{b})\right|\ll Q^{1/2+\varepsilon}.
\end{equation}
Indeed, by Lemma \ref{lemAdvanEst} the sum is
\[ \ll\sum_{q\le Q}q_1^{-1/2+\varepsilon}(q_1,N)^{1/2}
\ll\sum_{q_2\le Q}\sum_{q_1\le Q/q_2}q_1^{-1/2+\epsilon}(q_1,N)^{1/2}, \]
which by (\ref{DivisorEstimate}) and partial summation can be seen to be
\[ \ll P^{1/2+\epsilon}\sum_{q_2\le Q}q_2^{-1/2}\ll P^{1/2+\epsilon}. \]

The other case we need to consider is $q\ge Q/2K^s$. Here we estimate the corresponding part
of the main term by Lemmata \ref{lemIEst} and \ref{lemAdvanEst} as
\begin{eqnarray*}
&\ll& P^\varepsilon\sum_{\frac{Q}{2K^s}\le q\le Q}q^{-n}|S_0(q,\f{b})|
\int_{\frac{-1}{qQ}}^{\frac{1}{qQ}}K^{An}P^ndz. \\
&\ll& P^{n-1+\varepsilon}\sum_{q_2\le Q}q_2^{-n/2}\sum_{\frac{Q}{2q_2K^s}\le q_1\le \frac{Q}{q_2}}q_1^{-(n+1)/2+\varepsilon}
(q_1,N)^{1/2}.
\end{eqnarray*}
As before we use (\ref{DivisorEstimate}) in a partial summation to estimate the sum over $q_1$ as
\[ \ll P^\varepsilon\frac{Q}{q_2}\left(\frac{Q}{q_2K^s}\right)^{-(n+1)/2}, \]
which leaves us with a bound of 
\[ \ll P^{(n-1)/2+\varepsilon}\sum_{q_2\le Q}q_2^{-1/2}\ll P^{(n-1)/2+\varepsilon} \]
for the whole expression.
\end{proof}

By extending the summation to infinity and the interval of integration to the whole
of $\mathbb{R}$ in (\ref{MTC}), we arrive at the singular series and singular integral.

\begin{thm}\label{thmAsympFmla}
We have
\begin{equation}\label{RwN}
R_w(N)=I_w(N)\mathfrak{S}(N)+O\left(P^{(n-1)/2+\varepsilon}\right)\quad\text{as }P\longrightarrow\infty.
\end{equation}
Here $I_w$ is the singular integral
\begin{equation}\label{SingIntDefn}
I_w(N)=\int_{-\infty}^\infty I(z,\f{0})\,dz,
\end{equation}
and $\mathfrak{S}$ is the singular series 
\begin{equation}\label{SingSerDefn}
\mathfrak{S}(N)=\sum_{q=1}^\infty q^{-n}S_0(q,\f{0}).
\end{equation}
\end{thm}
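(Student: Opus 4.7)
The plan is to combine Lemma \ref{CFPlem7} with Lemmas \ref{lemET} and \ref{lemMTC}: together they give
\[ R_w(N)=\sum_{q\le Q}q^{-n}S_0(q,\f{0})\int_{-1/qQ}^{1/qQ}I(z,\f{0})\,dz+O\left(P^{(n-1)/2+\varepsilon}\right). \]
The theorem then reduces to showing that extending the $q$-sum to infinity and each $z$-integral to the whole real line costs only a further $O(P^{(n-1)/2+\varepsilon})$, so that the truncated main term agrees with $I_w(N)\mathfrak{S}(N)$ up to the same error.

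For the tail of the inner integrals, Lemma \ref{lemIEst} yields
\[ \int_{|z|\ge 1/qQ}|I(z,\f{0})|\,dz\ll K^{2An}(qQ)^{n/2-1}, \]
which is where $n\ge 4$ first enters (to make $|z|^{-n/2}$ integrable at infinity). Multiplying by $q^{-n}|S_0(q,\f{0})|$, summing over $q\le Q$, and invoking the bound (\ref{qSumEst}) already established inside the proof of Lemma \ref{lemMTC}, I would obtain
\[ K^{2An}Q^{n/2-1}\sum_{q\le Q}q^{-n/2-1}|S_0(q,\f{0})|\ll P^\varepsilon Q^{(n-1)/2}\ll P^{(n-1)/2+\varepsilon}, \]
with the factor $K^{2An}$ harmlessly absorbed into $P^\varepsilon$.

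For the tail of the $q$-summation, balancing the two bounds in Lemma \ref{lemIEst} at $|z|\asymp P^{-2}$ gives $\int_{-\infty}^\infty|I(z,\f{0})|\,dz\ll K^{2An}P^{n-2}$. On the arithmetic side, Lemma \ref{lemAdvanEst} with $\f{b}=\f{0}$ and the factorization $q=q_1q_2$ gives $q^{-n}|S_0(q,\f{0})|\ll q_1^{-(n-1)/2+\varepsilon}q_2^{-n/2+1}(q_1,N)^{1/2}$. Partial summation on the squarefree variable $q_1>Q/q_2$, using the estimate $\sum_{r\le X}(r,N)^{1/2}\ll P^\varepsilon X$ obtained just as in (\ref{DivisorEstimate}), produces a tail of size $\ll(Q/q_2)^{-(n-3)/2+\varepsilon}$ for the $q_1$-sum, and (\ref{NoSqFree}) then makes the resulting series over square-full $q_2$ convergent, yielding
\[ \sum_{q>Q}q^{-n}|S_0(q,\f{0})|\ll P^{-(n-3)/2+\varepsilon}. \]
Multiplying by the full integral bound produces exactly $P^{(n-1)/2+\varepsilon}$.

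The main obstacle is really just making these two tail estimates balance out to the exponent $(n-1)/2$. The hypothesis $n\ge 4$ is essential on both sides: for the convergence of the integral tail and for Lemma \ref{lemAdvanEst} to be sharp enough --- when $n=4$ the weaker uniform bound (\ref{SimpleEst}) is insufficient, which is precisely why the squarefree/squarefull decomposition was introduced. Once those two tails are in hand, everything else is routine bookkeeping.
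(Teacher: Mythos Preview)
Your proposal is correct and follows essentially the same approach as the paper: you start from Lemmas~\ref{CFPlem7}, \ref{lemET}, \ref{lemMTC}, extend the $z$-integral using Lemma~\ref{lemIEst} together with (\ref{qSumEst}), bound $I_w(N)\ll P^{n-2+\varepsilon}$ by splitting at $|z|\asymp P^{-2}$, and control the $q$-tail via Lemma~\ref{lemAdvanEst} and the squarefree/squarefull factorization---exactly as the paper does. The only cosmetic difference is that the paper organizes the tail $\sum_{q>Q}$ by first slicing into dyadic ranges $2^lQ<q\le 2^{l+1}Q$ and then decomposing $q=q_1q_2$ inside each slice, whereas you pass directly to the double sum over $(q_1,q_2)$ with $q_1>Q/q_2$; the $\varepsilon$ inherited from Lemma~\ref{lemAdvanEst} is precisely what makes your resulting series $\sum_{q_2}q_2^{-1/2-\varepsilon}$ over squarefull $q_2$ converge via (\ref{NoSqFree}).
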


\begin{proof}
Extending the integration in (\ref{MTC}) to the whole of the real line produces by Lemma \ref{lemIEst} an error of
\[ \ll P^\varepsilon\sum_{q\le Q}\frac{|S_0(q,\f{0})|}{q^n}\int_{\mathbb{R}\setminus[\frac{-1}{qQ},\frac{1}{qQ}]}|z|^{-n/2}\,dz
\ll P^\varepsilon Q^{n/2-1}\sum_{q\le Q}q^{-n/2-1}|S_0(q,\f{0})|. \]
This is indeed of the required size $O(P^{(n-1)/2+\varepsilon})$ as can be seen from (\ref{qSumEst}).

Next we extend the summation to infinity, which gives an error of
\[ |I_w(N)|\sum_{q>Q}q^{-n}|S_0(q,\f{0})|. \]
The first factor satisfies
\begin{equation}\label{IwNEst}
I_w(N)\ll P^{n-2+\varepsilon},
\end{equation}
since Lemma \ref{lemIEst} shows that
\[ \int_{-P^{-2}}^{P^{-2}}I(z,\f{0})\,dz\ll\int_{-P^{-2}}^{P^{-2}}K^{2An}P^n\,dz\ll P^{n-2+\varepsilon} \]
as well as
\[ \int_{\mathbb{R}\setminus [-P^{-2},P^{-2}]}I(z,\f{0})\,dz
\ll K^{2An}\int_{\mathbb{R}\setminus [-P^{-2},P^{-2}]}|z|^{-n/2}\,dz\ll P^{n-2+\varepsilon}. \]
The theorem will follow if we can show that
\begin{equation}\label{SingSerUpper}
\sum_{q>Q}q^{-n}|S_0(q,\f{0})|\ll Q^{(3-n)/2+\epsilon}.
\end{equation}
Indeed, splitting the sum into dyadic ranges and applying Lemma \ref{lemAdvanEst} yields the bound
\begin{equation}\label{term}
\ll\sum_{l=0}^\infty\left(2^lQ\right)^{-n}\sum_{2^lQ<q\le 2^{l+1}Q}q_1^{(n+1)/2+\varepsilon}q_2^{n/2+1}(q_1,N)^{1/2}.
\end{equation}
Now the inner sum is
\[ \ll (2^{l+1}Q)^{(n+1)/2+\epsilon}\sum_{q_2\le 2^{l+1}Q}q_2^{1/2}\sum_{q_1\le 2^{l+1}Q/q_2}{q_1}^\epsilon (q_1,N)^{1/2}. \]
Moreover, estimate (\ref{DivisorEstimate}) together with partial summation shows that the sum over $q_1$ is
\[ \ll (2^{l+1}Q)^{1+\epsilon}{q_2}^{-1} \]
by our choice of $Q$. Therefore we obtain the required bound
\[ \ll\sum_{l=0}^\infty\left(2^lQ\right)^{-n}(2^{l+1}Q)^{(n+3)/2+\epsilon}\ll Q^{(3-n)/2+\epsilon} \]
for the summation over all $q>Q$.
\end{proof}

\section{Transition to characteristic weight}\label{secTransition}

We begin by introducing some new notation.
Given any function $\omega:\mathbb{R}^n\rightarrow\mathbb{R}$ we write $R_\omega$ and $I_\omega$ for
the (formal) functions obtained by replacing $w(\f{x})$ with $\omega(\f{x})$ in the definitions of $R_w$
and $I_w$ respectively. We write $\f{c}$ for the centre of $B$, and set $\Gamma=B-\f{c}$. 
By integrating $w(\f{x})$ with respect to $\f{x}_0$ we obtain the two weight functions 
\begin{equation}\label{WpmDefn}
W_\pm(\f{x})=\int_{(1\pm K^{-A/2})\Gamma+\f{c}}w(\f{x},\f{x}_0)\,d\f{x}_0.
\end{equation}
We shall see below that, up to a small error, the weight $W_+$ majorises $\chi_{PB}$,
whereas $W_-$ essentially minorises it. It follows from equation (\ref{RwNDefn}) that
\begin{equation}\label{RWNDefn}
R_{W_\pm}(N)=\int_{(1\pm K^{-A/2})\Gamma+\f{c}}R_w(N)\,d\f{x}_0
\end{equation}
and that this is indeed the number of integral zeros $\f{x}$ of $F(\f{x},N)$ weighted by $W_\pm$.
Let us record the following corollary of Theorem \ref{thmAsympFmla}.

\begin{lem}\label{lemAsympWpm}
We have
\[ R_{W_\pm}(N)=I_{W_\pm}(N)\mathfrak{S}(N)+O\left(P^{(n-1)/2+\varepsilon}\right). \]
\end{lem}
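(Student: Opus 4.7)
The plan is to apply Theorem \ref{thmAsympFmla} pointwise as a function of the parameter $\f{x}_0$ and integrate the resulting identity over the region appearing in (\ref{RWNDefn}). Throughout Section \ref{secCM}, $\f{x}_0$ enters the argument only through the Gaussian weight $w$, and every estimate used — most notably the error term in Theorem \ref{thmAsympFmla} and the bound of Lemma \ref{lemIEst} — depends on $\f{x}_0$ only through the size constraint $\f{x}_0 \ll 1$. Since the set $(1 \pm K^{-A/2})\Gamma + \f{c}$ is contained in a bounded neighbourhood of $B$ for all large $P$, it automatically satisfies $\f{x}_0 \ll 1$ uniformly, so Theorem \ref{thmAsympFmla} applies with an implied constant independent of $\f{x}_0$ throughout the integration range.

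Integrating the identity $R_w(N) = I_w(N)\mathfrak{S}(N) + O(P^{(n-1)/2+\varepsilon})$ over $\f{x}_0 \in (1 \pm K^{-A/2})\Gamma + \f{c}$, the left-hand side becomes $R_{W_\pm}(N)$ by definition (\ref{RWNDefn}). The integration region has volume $O(1)$, so the error term integrates to $O(P^{(n-1)/2+\varepsilon})$. The singular series $\mathfrak{S}(N)$ is independent of $\f{x}_0$ and factors out, leaving it to show that
\[ \int_{(1 \pm K^{-A/2})\Gamma + \f{c}} I_w(N)\, d\f{x}_0 = I_{W_\pm}(N). \]

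To verify this, I would unfold $I_w(N)$ using (\ref{SingIntDefn}) and (\ref{Izbeta}) as a repeated integral over $z$ and $\f{x}$, and then interchange the order of integration so that $d\f{x}_0$ acts directly on $w(\f{x};\f{x}_0)$, producing $W_\pm(\f{x})$ via (\ref{WpmDefn}). Absolute integrability, which is what is needed for Fubini, follows from Lemma \ref{lemIEst}: the bound $|I(z,\f{0})| \ll K^{2An} \min(P^n, |z|^{-n/2})$ is uniform in $\f{x}_0$ and integrable on $\mathbb{R}$ for $n \ge 4$, while the $\f{x}_0$-range is bounded. After the swap one recognises $\int_{\mathbb{R}^n} W_\pm(\f{x}) e(z\mathfrak{Q}(\f{x},N))\, d\f{x}$ inside a $dz$-integral, which by definition is $I_{W_\pm}(N)$.

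The main obstacle, such as it is, is bookkeeping: one must check that no estimate in Section \ref{secCM} secretly depended on a fixed choice of $\f{x}_0$ beyond the size bound $\f{x}_0 \ll 1$, so that the error term in Theorem \ref{thmAsympFmla} is truly uniform over the integration range. Once this is confirmed, the remaining steps are just an application of Fubini and unwinding of definitions.
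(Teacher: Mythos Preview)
Your proposal is correct and follows essentially the same approach as the paper: integrate Theorem \ref{thmAsympFmla} over $\f{x}_0 \in (1\pm K^{-A/2})\Gamma+\f{c}$, identify the left-hand side via (\ref{RWNDefn}), and justify the interchange of integrals to recognise $I_{W_\pm}(N)$. The paper invokes (\ref{IwNEst}) for the $\f{x}_0$--$z$ swap and Lemma \ref{lemIEst} for the $\f{x}_0$--$\f{x}$ swap, while you appeal directly to Lemma \ref{lemIEst}; these are equivalent, and your explicit remark on the uniformity of the error in $\f{x}_0$ is a point the paper leaves implicit.
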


\begin{proof}
The result follows by integrating (\ref{RwN}) with respect to $\f{x}_0$ over the box $(1\pm K^{-A/2})\Gamma+\f{c}$.
By (\ref{RWNDefn}) it suffices to note that
\[ I_{W_\pm}(N)=\int_{(1\pm K^{-A/2})\Gamma+\f{c}}I_w(N)\,d\f{x}_0, \]
which holds because we may swap the integrations over $\f{x}_0$ and $z$  by (\ref{IwNEst}), and then the ones over
$\f{x}_0$ and $\f{x}$ by Lemma \ref{lemIEst}.
\end{proof}

\begin{lem}\label{lemWToChi}
For any $C>0$,
\[ R_{W_+}(N)\ge R_{\chi_{PB}}(N)+O(P^{-C})\ge R_{W_-}(N). \]
\end{lem}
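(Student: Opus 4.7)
\noindent\emph{Proof plan.}
The plan is to establish pointwise inequalities between $W_\pm(\f{x})$ and $\chi_{PB}(\f{x})$, and then to sum these inequalities over the zeros of $\mathfrak{Q}(\f{x},N)$, using the rapid Gaussian decay of $w$ to swallow the resulting error terms.

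First I would make the change of variables $\f{u}=K^A(\f{x}/P-\f{x}_0)$ in (\ref{WpmDefn}), which gives $d\f{x}_0=K^{-An}d\f{u}$ and, using that $\Gamma$ is centrally symmetric about $\f{0}$,
\[ W_\pm(\f{x})=\pi^{-n/2}\int_{R_\pm(\f{x})}e^{-|\f{u}|^2}\,d\f{u},\qquad R_\pm(\f{x})=K^A\bigl(\f{y}+(1\pm K^{-A/2})\Gamma\bigr), \]
where $\f{y}=\f{x}/P-\f{c}$. Since the Gaussian measure is rotation invariant I may also assume $\Gamma=\prod_i[-L_i,L_i]$ is axis-aligned. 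In particular, taking $R_\pm(\f{x})$ to be all of $\mathbb{R}^n$ yields the trivial upper bound $W_\pm(\f{x})\le 1$.

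The pointwise inequalities are then purely geometric. When $\f{x}\in PB$, we have $\f{y}\in\Gamma$, so every face of $(1+K^{-A/2})\Gamma$ lies at distance at least $L_iK^{-A/2}$ from $\f{y}$; after scaling by $K^A$ the ball of radius $\gg K^{A/2}$ around the origin is contained in $R_+(\f{x})$, and a standard Gaussian tail estimate gives $W_+(\f{x})\ge 1-O(\exp(-cK^A))$. When $\f{x}\notin PB$ some coordinate satisfies $|y_i|>L_i$, and the analogous calculation shows that the origin lies at $L^2$-distance at least $K^{A/2}L_i+K^A(|y_i|-L_i)$ from $R_-(\f{x})$; the Gaussian tail then yields $W_-(\f{x})\ll\exp\bigl(-c(K^{A/2}+K^A|\f{y}|)^2\bigr)$.

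It remains to sum over the zeros. For the upper bound I would split
\[ R_{W_+}(N)-R_{\chi_{PB}}(N)=\sum_{\substack{\f{x}\in PB\\\mathfrak{Q}(\f{x},N)=0}}(W_+(\f{x})-1)+\sum_{\substack{\f{x}\notin PB\\\mathfrak{Q}(\f{x},N)=0}}W_+(\f{x}), \]
observing that the second sum is non-negative while the first contains $\ll P^n$ terms of magnitude at most $\exp(-cK^A)$; this is $\ll P^{-C}$ once $A$ is chosen large in terms of $C$. For the lower bound $R_{\chi_{PB}}(N)\ge R_{W_-}(N)+O(P^{-C})$ the sum over $\f{x}\in PB$ is non-negative by $W_-\le 1$, and the sum over $\f{x}\notin PB$ is handled by a dyadic decomposition $|\f{x}|\sim 2^kP$, in which the $\ll (2^kP)^n$ integer points are each weighted by a factor decaying at least as fast as $\exp(-cK^A 4^k)$; summing in $k$ gives $\ll P^{-C}$.

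The only real point of substance is the geometric fact that $R_+(\f{x})$ contains a ball of radius $\gg K^{A/2}$ whenever $\f{x}\in PB$, together with the companion distance bound for $R_-(\f{x})$ when $\f{x}\notin PB$. Once these are in hand the rest is bookkeeping, since $\exp(-c(\log P)^A)$ beats any fixed negative power of $P$ as soon as $A$ is large enough, which easily absorbs the polynomial zero counts that appear.
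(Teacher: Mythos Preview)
Your argument is correct and mirrors the paper's own proof: the same change of variables expressing $W_\pm$ as Gaussian integrals over translated boxes, the same pointwise tail estimates (the paper phrases these via the one-dimensional inequality $\int_0^\alpha e^{-x^2}\,dx>\tfrac{\sqrt\pi}{2}(1-e^{-\alpha^2})$ at the corners and face-centres of $PB^*$, you via a contained-ball / distance-to-origin argument), and the same strategy of a trivial count near $PB$ followed by a dyadic decomposition for large $|\f{x}|$. One cosmetic slip: your displayed bound $W_-(\f{x})\ll\exp\bigl(-c(K^{A/2}+K^A|\f{y}|)^2\bigr)$ overstates matters for $\f{x}$ just outside $PB$ (there $|\f{y}|\asymp 1$ while the excess $|y_i|-L_i$ can be arbitrarily small, so the distance is only $\gg K^{A/2}$), but your subsequent dyadic bound $\exp(-cK^A4^k)$ is valid and is all the proof actually uses.
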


\begin{proof}
Recall that the box $\Gamma^*$ is centred at the origin and has edges parallel to the coordinate axes.
Therefore we can write $\Gamma^*=\prod_{i=1}^n[-\gamma_i^*,\gamma_i^*]$ with $\gamma>0$.
Substituting $\f{x}_0\mapsto\mathcal{M}P^{-1}(PK^{-A}\f{x}_0+\f{x}^*)$ in (\ref{WpmDefn}) gives
\begin{equation}\label{Wpm}
W_\pm(\f{x}^*)
=\pi^{-n/2}\prod_1^n\int_{K^A(-(1\pm K^{-A/2})\gamma^*+c^*-x^*/P)}^{K^A((1\pm K^{-A/2})\gamma^*+c^*-x^*/P)}
e^{-{x_0}^2}\,dx_0.
\end{equation}
We deduce that $W_\pm(\f{x}^*)$ is positive and less than $1$, and that it has a unique global
maximum at $\f{x}^*=P\f{c}^*$. Furthermore, it is easily seen that $W_\pm(\f{x}^*)$
decreases whenever all but one of the coordinates of $\f{x}^*$ are fixed and the distance
of the remaining coordinate $x_i$, say, from $Pc_i^*$ increases.

Hence on the hyperrectangle $PB^*$ the function $W_+(\f{x}^*)$ attains its minima
in the corners of $PB^*$. By symmetry, these minima are of same size, so that in order to derive a lower
bound for $W_+(\f{x}^*)$ on $PB^*$ it suffices to consider the corner $P(\f{c}^*-\boldsymbol{\gamma}^*)$.
By virtue of the inequality
\begin{equation}\label{GaussInt}
\frac{\sqrt{\pi}}{2}(1-e^{-\alpha^2})<\int_0^\alpha e^{-x^2}\,dx,\quad\forall\alpha>0
\end{equation}
applied with $\alpha=K^{A/2}\gamma^*$ we obtain
\[ W_+(P(\f{c}^*-\boldsymbol{\gamma}^*))>\prod_1^n\left(1-\exp\left(-K^A{\gamma^*}^2\right)\right)\ge 1+O(P^{-C}). \]
The first part of the lemma's assertion follows by (\ref{RWNDefn}) and the observation that there are
only $O(P^n)$ integer points in $PB$.

We deal with the second inequality by noting that on $\mathbb{R}^n\setminus\text{int}(PB)$
the function $W_-(\f{x}^*)$ attains its maximum at one of the centres
\[ \f{c}_j^\pm=P(\f{c}^*\pm(0,\ldots,0,\gamma_j^*,0,\ldots,0)) \]
of the faces of the hyperrectangle $PB^*$. We would like to find an upper bound for $W_-(\f{x}^*)$
on $\mathbb{R}^n\setminus PB$, so by symmetry it is sufficient to consider the $\f{c}_j^-$ only.
We let $\f{x}^*=\f{c}_j^-$ in (\ref{Wpm}). On estimating each factor other than the $j$th as $\ll 1$, we obtain
\[ W_-(\f{x}^*)\ll\int_{K^{A/2}\gamma_j^*}^\infty e^{-x_0^2}\,dx_0
\ll\frac{\sqrt{\pi}}{2}-\int_0^{K^{A/2}\gamma_j^*}e^{-x_0^2}\,dx_0 \]
On using (\ref{GaussInt}) with $\alpha=K^{A/2}\gamma_j^*$ we get
\[ W_-(\f{x}^*)\ll\exp\left(-K^A{\gamma_j^*}^2\right)\ll P^{-C}. \]
Hence we can deduce that
\[ \mathop{\sum_{\f{x}\in\mathbb{Z}^n}}_{\f{x}\not\in PB}W_-(\f{x})
=O(P^{-C})+\mathop{\sum_{\f{x}\in\mathbb{Z}^n}}_{|\f{x}-P\f{c}|\ge CP^2}W_-(\f{x}), \]
as there are $O(P^{2n})$ integer points in an $n$-dimensional ball of radius $CP^2$.
From the definition of $W_-(\f{x})$ it follows that $|\f{x}-P\f{c}|\ge C2^kP^2$ implies
\[ W_-(\f{x})\ll\exp(-C2^{2k-2}P^2K^{2A}) \]
since $\f{x}_0\ll 1$. Therefore
\[ \mathop{\sum_{\f{x}\in\mathbb{Z}^n}}_{|\f{x}-P\f{c}|\ge CP^2}W_-(\f{x})
\ll P^{-C}+\sum_{k=1}^\infty\mathop{\sum_{\f{x}\in\mathbb{Z}^n}}_{C2^kP^2\le |\f{x}-P\f{c}|<C2^{k+1}P^2}
\exp(-CkK)\ll P^{-C}. \]
Thus
\[ R_{W_-}(N)=\mathop{\sum_{\f{x}\in\mathbb{Z}^n}}_{\f{x}\in PB}W_-(\f{x})
+\mathop{\sum_{\f{x}\in\mathbb{Z}^n}}_{\f{x}\not\in PB}W_-(\f{x})\le R_{\chi_{PB}}(N)+O(P^{-C}), \]
as required.
\end{proof}

The following lemma provides us with a first derivative estimate on certain exponential integrals.

\begin{lem}\label{lemDerivEst}
For $z\in\mathbb{R}$ we have
\begin{equation}\label{DervEst}
\int_a^be(z v^2)\,dv\ll |z|^{-1/2}
\end{equation}
uniformly in all $a$, $b\in\mathbb{R}$.
\end{lem}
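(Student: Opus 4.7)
The plan is to reduce to a normalised Fresnel integral by a change of variable and then split according to whether the variable is small or large. Since $e(-zv^{2})=\overline{e(zv^{2})}$, we may assume $z>0$; the case $z=0$ is uninteresting as the claimed bound is then vacuous. Set $u=v\sqrt{z}$, so that $dv=z^{-1/2}\,du$, and the integral becomes
\[
\int_a^b e(zv^2)\,dv = z^{-1/2}\int_{a\sqrt{z}}^{b\sqrt{z}} e(u^2)\,du.
\]
It therefore suffices to prove that $\int_A^B e(u^2)\,du\ll 1$ uniformly in $A,B\in\mathbb{R}$.

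To bound the latter, I would split the range of integration at $\pm 1$. On any subinterval contained in $[-1,1]$ the integrand has modulus $1$, so the contribution is at most $2$. On a subinterval contained in $[1,\infty)$ or $(-\infty,-1]$, I would integrate by parts using the identity
\[
e(u^2) = \frac{1}{4\pi i u}\,\frac{d}{du} e(u^2),
\]
which gives
\[
\int_A^B e(u^2)\,du = \left[\frac{e(u^2)}{4\pi i u}\right]_A^B + \int_A^B \frac{e(u^2)}{4\pi i u^2}\,du.
\]
The boundary term is $O(1)$ since $|u|\ge 1$, and the remaining integral is absolutely convergent with $\int_1^\infty u^{-2}\,du = 1$, yielding an $O(1)$ contribution independent of $A,B$.

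Combining the pieces from $[-1,1]$ and from the two tails gives the uniform bound $\int_A^B e(u^2)\,du\ll 1$, and hence $\int_a^b e(zv^2)\,dv\ll |z|^{-1/2}$. There is no real obstacle here; the only care needed is to cover the cases where the interval $[a\sqrt{z},b\sqrt{z}]$ straddles $0$ or $\pm 1$, which is handled by decomposing into at most a bounded number of subintervals, each treated by one of the two estimates above.
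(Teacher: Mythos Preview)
Your argument is correct and is essentially the same as the paper's: both split the range into a central portion (handled trivially) and tails (handled by integration by parts using $e(u^2)=\frac{1}{4\pi i u}\frac{d}{du}e(u^2)$). The only difference is that you first substitute $u=v\sqrt{z}$ to normalise, whereas the paper works directly in the variable $v$ and splits at $\pm|z|^{-1/2}$; this is purely cosmetic.
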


\begin{proof}
We estimate the integral over the range $[-|z|^{-1/2},|z|^{1/2}]\cap [a,b]$ trivially, i.e. as $\ll |z|^{-1/2}$.
Hence we may assume without loss of generality that $|z|^{-1/2}\le a<b$. We write
\[ \int_a^be(z v^2)\,dv=\int_a^b4\pi izv\,e(zv^2)\frac{1}{4\pi izv}\,dv, \]
which by partial integration is seen to be
\[ \ll\frac{1}{|zb|}+\frac{1}{|za|}+\frac{1}{|z|}\int_a^b\frac{1}{v^2}\,dv\ll |z|^{-1/2}, \]
as was required.
\end{proof}

We also note that (\ref{SimpleEst}) and (\ref{SingSerUpper}) give us the simple upper bound
\begin{equation}\label{SingSerBound}
\mathfrak{S}(N)\ll \log N + P^{-1/2+\varepsilon}\ll K.
\end{equation}
To avoid cluttering our notation we let $R=[-K^{A/3},K^{A/3}]$ in what follows.

\begin{lem}\label{lemChiToChi}
We have
\[ I_{\chi_{P((1\pm K^{-A/2})\Gamma+\f{c})}}(N)\mathfrak{S}(N)=I_{\chi_{PB}}(N)\mathfrak{S}(N)+O(P^{n-2}K^{-A/6+1}). \]
\end{lem}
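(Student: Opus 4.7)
The plan is to reduce everything to bounding the difference of singular integrals
\[ \Delta_\pm = I_{\chi_{PB_\pm}}(N)-I_{\chi_{PB}}(N), \]
where $B_\pm=(1\pm K^{-A/2})\Gamma+\f{c}$. Since $\mathfrak{S}(N)\ll K$ by (\ref{SingSerBound}), it would suffice to show $\Delta_\pm\ll P^{n-2}K^{-A/2}$, which is in fact stronger than what the lemma claims. Unfolding the definition (\ref{SingIntDefn}) and (\ref{Izbeta}), $\Delta_\pm$ is (up to sign) an iterated integral of the oscillatory kernel $e(z(F(\f{x})-N))$ over the thin shell $PB_+\setminus PB$ or $PB\setminus PB_-$ in $\f{x}$, and over all of $\mathbb{R}$ in $z$; the two signs are treated identically.

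The key step is to rotate coordinates via $\f{x}\mapsto\mathcal{M}\f{x}$, which diagonalises $F$ into $D(\f{x})=\sum_i\lambda_i x_i^2$ and turns $B$ into a product of intervals $\prod_i[c_i^*-\gamma_i^*,c_i^*+\gamma_i^*]$ parallel to the coordinate axes. The rotated shell can then be expanded by the inclusion--exclusion identity
\[ \prod_i I_i^\pm = \sum_{S\subseteq\{1,\ldots,n\}}\prod_{i\in S}(I_i^\pm\setminus I_i)\times\prod_{i\notin S}I_i \]
into $2^n-1$ rectangular slabs, where on the slab indexed by a non-empty $S$ exactly $|S|$ of the intervals have width $\ll PK^{-A/2}$ and the remaining $n-|S|$ have width $\ll P$. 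On each slab the $\f{x}$-integral factorises into a product of one-dimensional Fresnel-type integrals $\int_a^b e(z\lambda_i v^2)\,dv$, each controlled by $\ll\min(PK^{-A/2},|z|^{-1/2})$ on a thin factor and $\ll\min(P,|z|^{-1/2})$ on a wide factor, using Lemma \ref{lemDerivEst} combined with the trivial bound.

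The dominant contribution comes from the $n$ slabs with $|S|=1$, giving an inner integral
\[ \ll \min(PK^{-A/2},|z|^{-1/2})\,\min(P,|z|^{-1/2})^{n-1}, \]
which I would then integrate in $z$ over the three natural ranges $|z|\le P^{-2}$, $P^{-2}\le|z|\le P^{-2}K^A$, and $|z|\ge P^{-2}K^A$, each producing at most $P^{n-2}K^{-A/2}$ (the last range relying on $n\ge 4$ for convergence at infinity). Slabs with $|S|\ge 2$ are strictly smaller and absorbed by the same bound. The main technical obstacle I anticipate is the justification of Fubini's theorem, since the outer integral is only conditionally convergent a priori; this is however resolved by the uniform estimate $\min(P^n,|z|^{-n/2})$ on the inner integral, which gives absolute convergence for $n\ge 3$. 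Multiplying through by $\mathfrak{S}(N)\ll K$ yields $\Delta_\pm\mathfrak{S}(N)\ll P^{n-2}K^{1-A/2}$, comfortably stronger than the claimed $O(P^{n-2}K^{-A/6+1})$.
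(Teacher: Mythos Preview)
Your argument is correct and in fact yields the sharper saving $K^{-A/2}$ rather than $K^{-A/6}$, but it takes a different route from the paper. After the same reduction via $\mathfrak{S}(N)\ll K$ and the rescaling that pulls out $P^{n-2}$, the paper does \emph{not} decompose the shell by inclusion--exclusion. Instead it simply splits the $z$-integration at $|z|=K^{A/3}$: on the short range the inner integral is estimated trivially by the volume $\ll K^{-A/2}$ of the rescaled shell, giving $K^{A/3}\cdot K^{-A/2}=K^{-A/6}$; on the long range the shell structure is discarded via the triangle inequality $|\int_{V_\pm}|\le|\int_{B_\pm^*}|+|\int_{B^*}|$, and each full box factorises directly to give $|z|^{-n/2}$, whence $\int_{|z|>K^{A/3}}|z|^{-n/2}\,dz\ll K^{A(2-n)/6}\le K^{-A/6}$. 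Your slab decomposition preserves the thinness throughout and so avoids the $K^{A/3}$ loss, at the cost of the combinatorics and a three-range $z$-integration; the paper's argument is shorter but coarser. Since the exponent $-A/6$ already matches the error in Theorem~\ref{thmAsympFmlaChi}, the extra precision is not needed here.

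One harmless imprecision: each thin factor $I_i^\pm\setminus I_i$ is a union of \emph{two} intervals, so your ``slabs'' are really unions of $2^{|S|}$ genuine rectangles; the factorisation and the bound $\min(PK^{-A/2},|z|^{-1/2})$ still apply to each piece, so only the implicit constant is affected. Your Fubini concern is also moot: the inner $\f{x}$-integral is over a bounded region and the resulting function of $z$ is $O(\min(P^n,|z|^{-n/2}))$, so the $z$-integral is absolutely convergent from the outset and no interchange of limits is required.
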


\begin{proof}
By (\ref{SingSerBound}) it is plainly sufficient to show that
\[ I_{\chi_{P((1\pm K^{-A/2})\Gamma+\f{c})}}(N)-I_{\chi_{PB}}(N)\ll K^{-A/6}. \]
Defining
\[ V_+=(1+K^{-A/2})\Gamma^*+\f{c}^*\setminus B^*\quad\text{ and }\quad
V_-=B^*\setminus (1-K^{-A/2})\Gamma^*+\f{c}^* \]
we have
\begin{equation}\label{IDiff}
I_{\chi_{P((1\pm K^{-A/2})\Gamma+\f{c})}}(N)-I_{\chi_{PB}}(N)
\ll P^{n-2}\int_{-\infty}^\infty\left|\int_{V_\pm}e(zF(\f{x}))\,d\f{x}\right|\,dz.
\end{equation}
The inner integral is trivially $\ll K^{-A/2}$, so that the
integration over $z$ restricted to $R$ is $\ll K^{-A/6}$.
Furthermore, the modulus in (\ref{IDiff}) is
\[ \le\left|\int_{(1\pm K^{-A/2})\Gamma^*+\f{c}^*}e(zF(\f{x}))\,d\f{x}\right|+\left|\int_{B^*}e(zF(\f{x}))\,d\f{x}\right|. \]
We recall that the substitution $\f{x}\mapsto\mathcal{M}\f{x}$ diagonalizes $F$ and aligns the edges of $B$ with
the coordinate axes, allowing us to factorize the integrals. Thereupon each factor can be estimated according
to Lemma \ref{lemDerivEst}. This yields
\[ \int_{\mathbb{R}\setminus R}\left|\int_{V_\pm}e(zD(\f{x}))\,d\f{x}\right|\,dz
\ll\int_{\mathbb{R}\setminus R}|z|^{-n/2}\, dz\ll K^{A(2-n)/6}\ll K^{-A/6}, \]
as required.
\end{proof}

\begin{lem}\label{lemChiToW}
We have
\begin{equation}\label{midchain}
I_{\chi_{P((1\pm K^{-A/2})\Gamma+\f{c})}}(N)\mathfrak{S}(N)=R_{W_\pm}(N)+O\left(P^{n-2}K^{-A/3+1}\right).
\end{equation}
\end{lem}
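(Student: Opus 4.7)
The plan is to combine Lemma~\ref{lemAsympWpm} with a direct comparison of the singular integrals $I_{W_\pm}(N)$ and $I_{\chi_{P((1\pm K^{-A/2})\Gamma+\f{c})}}(N)$. By Lemma~\ref{lemAsympWpm} we have $R_{W_\pm}(N) = I_{W_\pm}(N)\mathfrak{S}(N) + O(P^{(n-1)/2+\varepsilon})$, and for $n\ge 4$ the error $P^{(n-1)/2+\varepsilon}$ is easily absorbed into $P^{n-2}K^{-A/3+1}$. Using also $\mathfrak{S}(N)\ll K$ from (\ref{SingSerBound}), the lemma reduces to showing
\[ I_{\chi_{P((1\pm K^{-A/2})\Gamma+\f{c})}}(N) - I_{W_\pm}(N) \ll P^{n-2}K^{-A/3}. \]

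To exploit the scaling, I would substitute $\f{x}=P\f{y}$ (and correspondingly $z\mapsto z/P^2$) in both singular integrals. This pulls out a factor $P^{n-2}$ and converts $W_\pm(P\f{y})$ into
\[ \tilde W_\pm(\f{y})=\int_{(1\pm K^{-A/2})\Gamma+\f{c}}\pi^{-n/2}K^{An}\exp(-|\f{y}-\f{x}_0|^2 K^{2A})\,d\f{x}_0, \]
which is the convolution of $\chi_{(1\pm K^{-A/2})\Gamma+\f{c}}$ with a Gaussian of width $K^{-A}$. A Gaussian tail argument parallel to the one in the proof of Lemma~\ref{lemWToChi}, but with $\alpha\sim\sqrt K$ rather than $K^{A/2}$, shows that $\tilde W_\pm(\f{y})$ agrees with $\chi_{(1\pm K^{-A/2})\Gamma+\f{c}}(\f{y})$ up to an $O(P^{-C})$ error whenever $\f{y}$ lies at distance at least $K^{-A+1/2}$ from the boundary of $(1\pm K^{-A/2})\Gamma+\f{c}$. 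This yields the crucial $L^1$ bound $\|\chi_{(1\pm K^{-A/2})\Gamma+\f{c}} - \tilde W_\pm\|_1 \ll K^{-A+1/2}$.

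With $\tilde J(z) = \int(\chi_{(1\pm K^{-A/2})\Gamma+\f{c}} - \tilde W_\pm)(\f{y})e(zF(\f{y}))\,d\f{y}$, the $I$-difference equals $P^{n-2}\int e(-zN/P^2)\tilde J(z)\,dz$, and I would split the $z$-integration at $R=[-K^{A/3}, K^{A/3}]$, exactly as in the proof of Lemma~\ref{lemChiToChi}. On $R$, the trivial bound $|\tilde J(z)|\ll K^{-A+1/2}$ contributes at most $\ll K^{-2A/3+1/2}$. For $z\notin R$, the substitution $\f{y}\mapsto\mathcal{M}\f{u}$ factorises both $\chi$ and $\tilde W_\pm$ into products of single-coordinate functions; Lemma~\ref{lemDerivEst}, applied factor by factor and combined with a routine Fubini step (together with a triangle inequality applied to the decomposition $\tilde W_\pm = \chi + (\tilde W_\pm - \chi)$) to handle the Gaussian convolutions in $\tilde W_\pm$, then gives $|\tilde J(z)|\ll |z|^{-n/2}$. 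Its tail integral is $\ll K^{-A(n-2)/6}\le K^{-A/3}$, which for $n\ge 4$ dominates the $z\in R$ contribution. The main obstacle is the $L^1$ estimate on $\chi - \tilde W_\pm$, which demands careful accounting of the Gaussian transition region; the rest is mechanical oscillatory-integral bookkeeping mirroring Lemma~\ref{lemChiToChi}, except that here the tail $z\notin R$ supplies the dominant contribution, which is why the exponent in the stated error is $-A/3$ rather than $-A/6$.
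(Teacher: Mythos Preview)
Your reduction via Lemma~\ref{lemAsympWpm} and (\ref{SingSerBound}), the $P\f{y}$-rescaling, and the split at $R=[-K^{A/3},K^{A/3}]$ all match the paper. For $z\notin R$ your Fubini argument is precisely what the paper does in (\ref{SingIntBound}): after swapping the Gaussian convolution variable with the box variable $\f{x}_0$, Lemma~\ref{lemDerivEst} applies to the $\f{x}_0$-integral uniformly in the shift, giving the $|z|^{-n/2}$ bound. (Your parenthetical about decomposing $\tilde W_\pm=\chi+(\tilde W_\pm-\chi)$ is unnecessary at this stage and would not by itself supply enough decay for large $|z|$; just keep the Fubini.)

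The genuine difference lies on the range $z\in R$. The paper writes both singular integrals in the common form $P^{n-2}\pi^{-n/2}\int_{\mathbb{R}}\int_{(1\pm K^{-A/2})\Gamma+\f{c}} J(\alpha,N/P^2)\,d\f{x}_0\,dz$ with $\alpha=\delta=K^{-A}$ and $\alpha=0$, and then Taylor-expands the phase via $e(y)-1\ll|y|$ to obtain $J(\delta,\cdot)-J(0,\cdot)\ll|z|\delta$, which integrated over $R$ gives $K^{-A/3}$. You instead use a geometric $L^1$ bound $\|\chi-\tilde W_\pm\|_1\ll K^{-A+1/2}$ on the weight difference, yielding the sharper $K^{-2A/3+1/2}$ on $R$. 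Both routes are correct and land at the same final $K^{-A/3}$, which is dictated by the tail. Your argument is a shade more conceptual in that it separates the question ``how close is the smoothed weight to $\chi$'' from the oscillatory-integral machinery; the paper's phase-expansion avoids the boundary-layer estimate altogether and is marginally quicker to write down.
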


\begin{proof}
By Lemma \ref{lemAsympWpm} and (\ref{SingSerBound}) it is sufficient to prove
\begin{equation}\label{IDiff2}
I_{\chi_{P((1\pm K^{-A/2})\Gamma+\f{c})}}(N)-I_{W_\pm}(N)\ll P^{n-2}K^{-A/6}.
\end{equation}
If we let $\delta=K^{-A}$, then by definition
\[I_{W_\pm}(N)=\frac{\delta^{-n}}{\pi^{n/2}}\int_{-\infty}^\infty\int_{(1\pm\delta^{1/2})\Gamma+\f{c}}\int_{\mathbb{R}^n}
w(\f{x},\f{x}_0)e\big(z\mathfrak{Q}(\f{x},N)\big)\,d\f{x}\,d\f{x}_0\,dz. \]
So substituting $\f{x}\mapsto P(\delta\f{x}+\f{x}_0)$ and $z\mapsto P^{-2}z$ gives
\[ I_{W_\pm}(N)=\frac{P^{n-2}}{\pi^{n/2}}\int_{\mathbb{R}}\int_{(1\pm K^{-A/2})\Gamma+\f{c}}J(\delta,N/P^2)\,d\f{x}_0\,dz, \]
where
\[ J(\alpha,\beta)=\int_{\mathbb{R}^n}e^{-|\f{x}|^2}e(z\mathfrak{Q}(\alpha\f{x}+\f{x}_0,\beta))\,d\f{x}. \]
On the other hand we have
\[ I_{\chi_{P((1\pm K^{-A/2})\Gamma+\f{c})}}(N)
=\frac{P^{n-2}}{\pi^{n/2}}\int_{\mathbb{R}}\int_{(1\pm K^{-A/2})\Gamma+\f{c}}
J(0,N/P^2)\,d\f{x}_0\,dz, \]
whence (\ref{IDiff2}) becomes
\begin{equation}\label{JDiff}
\int_{-\infty}^\infty\int_{(1\pm K^{-A/2})\Gamma+\f{c}}J(\delta,N/P^2)-J(0,N/P^2)\,d\f{x}_0\,dz\ll K^{-A/6}.
\end{equation}
The difference $J(\delta,N/P^2)-J(0,N/P^2)$ may be written as
\[ \int_{\mathbb{R}^n}\frac{e\big(z\mathfrak{Q}(\f{x}_0,N/P^2)\big)}{e^{|\f{x}|^2}}
\Big(e\big(zF(\delta\f{x}+\f{x}_0)-zF(\f{x}_0)\big)-1\Big)\,d\f{x} \]
and is hence bounded by
\[ \ll\int_{\mathbb{R}^n}e^{-|\f{x}|^2}\big|z\big(F(\delta\f{x}+\f{x}_0)-F(\f{x}_0)\big)\big|\,d\f{x} \]
since $e(y)-1\ll |y|$. Moreover,
\[ F(\delta\f{x}+\f{x}_0)-F(\f{x}_0)=F(\delta\f{x})+\delta\f{x}.\nabla F(\f{x}_0)\ll \delta^2|\f{x}|^2+\delta|\f{x}|, \]
which shows that
\[ J(\delta,N/P^2)-J(0,N/P^2)\ll |z|\delta. \]
Therefore
\begin{equation}\label{JLow}
\int_R\int_{(1\pm K^{-A/2})\Gamma+\f{c}}J(\delta,N/P^2)-J(0,N/P^2)\,d\f{x}_0\,dz\ll\ K^{-A/3}.
\end{equation}
To estimate the remaining range $\mathbb{R}\setminus R$ of the integration over $z$ in (\ref{JDiff})
we note that uniformly in all $\alpha\in\mathbb{R}$
\begin{equation}\label{SingIntBound}
\int_{(1\pm\sqrt{\alpha})\Gamma+\f{c}}e\big(z\mathfrak{Q}(\alpha\f{x}+\f{x}_0,N/P^2)\big)\,d\f{x}_0\ll |z|^{-n/2},
\end{equation}
which can be seen by pulling out the term $e(-zN)$, diagonalizing $F$, factorizing the integrand
and then applying Lemma \ref{lemDerivEst}. Hence
\[ \int_{\mathbb{R}\setminus R}\int_{(1+\sqrt{\alpha})\Gamma+\f{c}}J(\alpha,N/P^2)\,d\f{x}_0\,dz
\ll\int_{\mathbb{R}\setminus R}|z|^{-n/2}\,dz\ll K^{-A/3}. \]
Applying this result with $\alpha=0$ and $\alpha=\delta$ shows together with (\ref{JLow}) that
the estimate (\ref{JDiff}) holds.
\end{proof}

\begin{thm}\label{thmAsympFmlaChi}
We have
\[ R_{\chi_{PB}}(N)=I_{\chi_{PB}}(N)\mathfrak{S}(N)+O\left(P^{n-2}K^{-A/6+1}\right). \]
\end{thm}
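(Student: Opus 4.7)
The plan is simply to chain together the three preparatory lemmas of Section \ref{secTransition}, which have been designed precisely so that the composition yields the desired asymptotic.

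First I would combine Lemma \ref{lemChiToChi} and Lemma \ref{lemChiToW}. The former equates $I_{\chi_{P((1\pm K^{-A/2})\Gamma+\f{c})}}(N)\mathfrak{S}(N)$ with $I_{\chi_{PB}}(N)\mathfrak{S}(N)$ up to an error of $O(P^{n-2}K^{-A/6+1})$, while the latter equates it with $R_{W_\pm}(N)$ up to the smaller error $O(P^{n-2}K^{-A/3+1})$. Subtracting these identities gives
\[ R_{W_\pm}(N)=I_{\chi_{PB}}(N)\mathfrak{S}(N)+O(P^{n-2}K^{-A/6+1}), \]
valid for both choices of sign.

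Next I would apply the sandwich inequality of Lemma \ref{lemWToChi}, which states $R_{W_-}(N)+O(P^{-C})\le R_{\chi_{PB}}(N)\le R_{W_+}(N)+O(P^{-C})$ for any $C>0$. Inserting the asymptotic identity from the previous step into both the upper and lower bounds, and choosing $C$ large enough that $P^{-C}$ is absorbed into $O(P^{n-2}K^{-A/6+1})$, immediately yields
\[ R_{\chi_{PB}}(N)=I_{\chi_{PB}}(N)\mathfrak{S}(N)+O(P^{n-2}K^{-A/6+1}), \]
which is the claim of Theorem \ref{thmAsympFmlaChi}.

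There is essentially no obstacle here: all the analytic work has been carried out already in the preceding lemmas, and the present step is purely a matter of combining the error terms. The only thing to check is that the dominant error indeed comes from Lemma \ref{lemChiToChi}, the comparison of singular integrals over the perturbed and unperturbed boxes, since $K^{-A/6}$ is larger than $K^{-A/3}$ and than any negative power of $P$.
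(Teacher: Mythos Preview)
Your proposal is correct and follows essentially the same approach as the paper's proof: both combine Lemmata \ref{lemWToChi}, \ref{lemChiToChi} and \ref{lemChiToW} into a sandwich argument, with the dominant error $O(P^{n-2}K^{-A/6+1})$ coming from Lemma \ref{lemChiToChi}. The only cosmetic difference is that the paper displays the combination as a single chain of (in)equalities, whereas you first deduce $R_{W_\pm}(N)=I_{\chi_{PB}}(N)\mathfrak{S}(N)+O(P^{n-2}K^{-A/6+1})$ and then sandwich.
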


\begin{proof}
Combining Lemmata \ref{lemWToChi}, \ref{lemChiToChi} and \ref{lemChiToW} yields the chain of (in)equalities
\[ I_{\chi_{PB}}\mathfrak{S}=I_{\chi_{P((1+K^{-A/2})\Gamma+\f{c})}}\mathfrak{S}+O(P^{n-2}K^{-A/6+1})
=R_{W_+}\ge R_{\chi_{PB}}+O(P^{-C})\]
\[ \ge R_{W_-}=I_{\chi_{P((1-K^{-A/2})\Gamma+\f{c})}}\mathfrak{S}+O(P^{n-2}K^{-A/6+1})=I_{\chi_{PB}}\mathfrak{S}. \]
This proves the theorem.
\end{proof}

\noindent
Nic Niedermowwe \\
Mathematical Institute \\
24--29 St. Giles' \\
Oxford \\
OX1 3LB \\
United Kingdom \\
niedermo@maths.ox.ac.uk \\
\\
\\

\end{document}